\documentclass[10pt]{amsart}
\usepackage{hyperref}
\usepackage{amsmath}
\usepackage{amssymb}
\usepackage{xypic}
\usepackage{yhmath}
\usepackage{setspace}
\def\beqnn{\begin{eqnarray*}}\def\eeqnn{\end{eqnarray*}}

\newtheorem{theorem}{Theorem}[section]
\newtheorem{lemma}[theorem]{Lemma}
\newtheorem{proposition}[theorem]{Proposition}
\newtheorem{question}[theorem]{Question}

\theoremstyle{definition}

\theoremstyle{remark}
\newtheorem{remark}[theorem]{Remark}

\numberwithin{equation}{section}

\begin{document}

\begin{center}
\title[Sharp multiplier estimates for higher Schwarzians of Koebe function]{Sharp multiplier estimates for the higher-order Schwarzian derivatives of the Koebe function}
\end{center}

\author{Jianjun Jin}
\address{School of Mathematics Sciences, Hefei University of Technology, Xuancheng Campus, Xuancheng 242000, P.R.China}
\email{jin@hfut.edu.cn, jinjjhb@163.com}
\subjclass[2020]{47A30, 30C35, 30H20}



\keywords{Schwarzian derivative, higher-order Schwarzian derivative, univalent function, Koebe function, multiplication operator, norm of multiplication operator.}
\begin{abstract}
In this note we study the multiplier norm estimates for the multiplication operators between weighted Bergman spaces, whose symbols are the higher-order Schwarzian derivatives of univalent functions. We establish sharp multiplier estimates for the higher-order Schwarzian derivatives of the Koebe function. This extends a related result by Shimorin. The proof of our new theorem relies on an explicit formula for the higher-order Schwarzian derivatives of the Koebe function and a recent theorem from our earlier work.  We finally point out that the Koebe function is still the extremal function for certain higher-order Schwarzians of the univalent functions.  
\end{abstract}

\maketitle

\section{{\bf {Introduction}}}
Let $\Delta=\{z:|z|<1\}$ denote the unit disk in the
complex plane $\mathbb{C}$. We use $\mathcal{A}(\Delta)$ to denote the class of all analytic functions in $\Delta$. For $-1<\alpha<\infty$, we define the weighted Bergman space ${\bf A}_{\alpha}^2={\bf A}_{\alpha}^2(\Delta)$ as
$${\bf A}_{\alpha}^2(\Delta)=\{\phi \in \mathcal{A}(\Delta) : \|\phi\|_{\alpha}^2:=(\alpha+1)\iint_{\Delta}|\phi (z)|^2(1-|z|^2)^{\alpha}\frac{dxdy}{\pi}<\infty\}.$$
In his paper \cite{SS}, to study the universal integral means spectrum of univalent functions, Shimorin investigated a multiplication operator from ${\bf A}_{\alpha}^2$ to ${\bf A}_{\alpha+4}^2$, which is induced by the Schwarzian derivative of a univalent function. To recall the results of \cite{SS}, we need to introduce some notations and definitions. 

We let $\mathcal{S}$ be the class of all univalent functions $f$ in $\Delta$ with $f(0)=f'(0)-1=0$.  Let $t\in \mathbb{R}$. For $f\in \mathcal{S}$, the {\em integral means spectrum} $\beta_f(t)$ is defined by
\begin{equation}\label{defi}
\beta_f(t):=\limsup_{r \rightarrow 1^{-}}\frac{\log \int_{-\pi}^{\pi} |f'(re^{i\theta})|^{t}d\theta}{|\log(1-r)|}. 
\end{equation}
The {\em universal integral means spectrum} $B(t)$ is defined by
\begin{equation}
B(t)=\sup\limits_{f\in \mathcal{S}} \beta_f(t).\nonumber
\end{equation}

The {\em multiplication operator} $M_g$ with the symbol $g \in \mathcal{A}(\Delta)$ is defined by
$$M_g(\phi)(z):=g(z)\phi(z), \,\,\phi\in \mathcal{A}(\Delta).$$
For $\gamma\in \mathbb{R}$, we define the growth space $\mathcal{B}_{\gamma}=\mathcal{B}_{\gamma}(\Delta)$ as
$$
\mathcal{B}_{\gamma}(\Delta)=\left\{ \phi \in \mathcal{A}(\Delta) : 
\|\phi\|_{\mathcal{B}_{\gamma}}:=\sup_{z\in \Delta}|\phi(z)|(1-|z|^2)^{\gamma}<\infty \right\}.
$$ 
It is well known, see \cite{zhao}, that 
\begin{proposition}\label{th-1}Let $\beta>\alpha>-1$. 
The multiplication operator $M_g$ is bounded from ${\bf A}_{\alpha}^2$ to ${\bf A}_{\beta}^2$ if and only if $g$ belongs to $\mathcal{B}_{\gamma}$ with $\gamma=(\beta-\alpha)/2.$
\end{proposition}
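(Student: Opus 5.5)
We prove the two implications separately. Set $\gamma=(\beta-\alpha)/2>0$. Throughout we write $K(z,w)=(1-z\bar w)^{-(\alpha+2)}$ for the reproducing kernel of ${\bf A}_{\alpha}^2$. With the normalization $(\alpha+1)\,dA/\pi$ one has $\|K(\cdot,w)\|_{\alpha}^2=K(w,w)=(1-|w|^2)^{-(\alpha+2)}$.

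\textbf{Sufficiency.} Suppose $g\in\mathcal{B}_{\gamma}$, so that $|g(z)|\le \|g\|_{\mathcal{B}_{\gamma}}(1-|z|^2)^{-\gamma}$ for every $z\in\Delta$. Then for each $\phi\in{\bf A}_{\alpha}^2$,
$$\|g\phi\|_{\beta}^2=(\beta+1)\iint_{\Delta}|g|^2|\phi|^2(1-|z|^2)^{\beta}\frac{dxdy}{\pi}\le \frac{\beta+1}{\alpha+1}\|g\|_{\mathcal{B}_{\gamma}}^2\|\phi\|_{\alpha}^2,$$
because $(1-|z|^2)^{\beta-2\gamma}=(1-|z|^2)^{\alpha}$. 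Hence $M_g$ is bounded, with $\|M_g\|\le\sqrt{(\beta+1)/(\alpha+1)}\,\|g\|_{\mathcal{B}_{\gamma}}$.

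\textbf{Necessity.} Suppose $M_g:{\bf A}_{\alpha}^2\to{\bf A}_{\beta}^2$ is bounded. We need two ingredients.

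The first is the standard pointwise estimate in ${\bf A}_{\beta}^2$:
$$|h(w)|\le \|h\|_{\beta}(1-|w|^2)^{-(\beta+2)/2}.$$
This follows from the reproducing property and Cauchy--Schwarz. Alternatively, apply subharmonicity of $|h|^2$ on the disk $D(w,(1-|w|)/2)$, on which $1-|z|^2\asymp 1-|w|^2$; this gives the same bound up to a constant, which is enough for our purpose.

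The second is to test $M_g$ on the normalized kernels
$$\phi_w(z)=\frac{(1-|w|^2)^{(\alpha+2)/2}}{(1-z\bar w)^{\alpha+2}},$$
which satisfy $\|\phi_w\|_{\alpha}=1$. Applying the pointwise estimate to $h=g\phi_w$ at the point $z=w$ gives
$$|g(w)|\,(1-|w|^2)^{-(\alpha+2)/2}=|g(w)\phi_w(w)|\le \|M_g\|\,(1-|w|^2)^{-(\beta+2)/2}.$$
Rearranging yields $|g(w)|(1-|w|^2)^{\gamma}\le\|M_g\|$. Since this holds for every $w\in\Delta$, we conclude $g\in\mathcal{B}_{\gamma}$ with $\|g\|_{\mathcal{B}_{\gamma}}\le\|M_g\|$.

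The only point needing care is the kernel norm computation $\|K(\cdot,w)\|_{\alpha}^2=(1-|w|^2)^{-(\alpha+2)}$. This can be checked directly from the Taylor expansion: $\|z^n\|_{\alpha}^2=n!\,\Gamma(\alpha+2)/\Gamma(n+\alpha+2)$, which is exactly the reciprocal of the $n$-th binomial coefficient appearing in the expansion of $(1-x)^{-(\alpha+2)}$. Nothing else is delicate. The hypothesis $\beta>\alpha$ is used only to make $\gamma>0$, which is the range in which $\mathcal{B}_{\gamma}$ is the relevant (nontrivial) class.
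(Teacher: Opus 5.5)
The paper gives no proof of this proposition. It is quoted as well known, with a pointer to the cited reference, which treats the more general problem of pointwise multipliers between weighted Bergman spaces with possibly different exponents. Your argument is correct and self-contained, and it is the standard proof for the Hilbert-space case the paper needs.

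\begin{itemize}
\item Sufficiency is immediate from $(1-|z|^2)^{\beta-2\gamma}=(1-|z|^2)^{\alpha}$.
\item Necessity follows by testing $M_g$ on the normalized kernels $\phi_w$ and applying the point-evaluation bound $|h(w)|\le\|h\|_\beta(1-|w|^2)^{-(\beta+2)/2}$ in $\mathbf{A}^2_\beta$.
\item Your values $\|\phi_w\|_\alpha=1$ and $\phi_w(w)=(1-|w|^2)^{-(\alpha+2)/2}$ are consistent with $\|z^n\|_\alpha^2=n!\,\Gamma(\alpha+2)/\Gamma(n+\alpha+2)$, cf.\ (\ref{l-eq-2}).
\item Your closing remark is also right: $\beta>\alpha$ is never used. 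For $\beta<\alpha$ the same argument shows $M_g$ is bounded only when $g\equiv 0$, since then $\mathcal{B}_\gamma=\{0\}$ by the maximum principle.
\end{itemize}

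Beyond the bare citation, your route gives the explicit two-sided estimate
\[
\|g\|_{\mathcal{B}_\gamma}\le\|g\|_{\mathbf{M}_{\alpha,\beta}}\le\sqrt{\tfrac{\beta+1}{\alpha+1}}\,\|g\|_{\mathcal{B}_\gamma}.
\]
This is a useful check on the paper's sharp result. Take $g=S_\kappa^{[p,q]}$, so $\|g\|_{\mathcal{B}_{p+q}}=(p+q-1)!$ and $\beta=\alpha+2p+2q$. The quantity in Theorem \ref{th-m} can then be rewritten as
\[
\mathcal{N}_{\alpha,p,q}=[(p+q-1)!]^2\prod_{k=0}^{p+q-1}\frac{\alpha+3+2k}{\alpha+2+2k}.
\]
It therefore lies strictly between $[(p+q-1)!]^2$ and
\[
\frac{\alpha+2p+2q+1}{\alpha+1}[(p+q-1)!]^2=[(p+q-1)!]^2\prod_{k=0}^{p+q-1}\frac{\alpha+3+2k}{\alpha+1+2k}.
\]
So neither constant in your estimate is attained for this symbol. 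That is why the paper needs the finer test functions $(1-rz^2)^{-\lambda}$ together with Lemma \ref{rec-2} to pin down the exact multiplier norm.
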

 
For $\beta>\alpha>-1$, when $M_g$ is bounded from ${\bf A}_{\alpha}^2$ to ${\bf A}_{\beta}^2$, we say $g$ is a {\em multiplier} from ${\bf A}_{\alpha}^2$ to ${\bf A}_{\beta}^2$. The class of all multipliers, denoted by $\mathbf{M}_{\alpha, \beta}$, is a Banach space with the following multiplier norm
$$
\|g\|_{{\bf {M}}_{\alpha, \beta}}:=\sup_{\phi \in {\bf A}^{2}_{\alpha}, \phi \neq 0}\frac{\|g \phi\|_{\beta}}{\|\phi\|_{\alpha}}.
$$ 

For a locally univalent function $f$ in an open domain $\Omega$ of $\mathbb{C}$, the {\em Schwarzian derivative} $S_f$ of $f$ is defined by 
$$S_f(z)=\bigg[\frac{f''(z)}{f'(z)}\bigg]'-\frac{1}{2}\left[\frac{f''(z)}{f'(z)}\right]^2, \,z\in \Omega.$$
Here, $N_f(z):= f''(z)/f'(z)$ is the Pre-Schwarzian derivative of $f$. Shimorin has considered in \cite{SS} the following multiplication operator
$$M_{S_f}(\phi)(z):=S_f(z)\phi(z),\, \phi\in \mathcal{A}(\Delta).$$
It is well known that
 $$|S_f(z)|(1-|z|^2)^2\leq 6,$$
 for all $f\in \mathcal{S}$ so that $S_f$ belongs to $\mathcal{B}_2$ and $\|S_f\|_{\mathcal{B}_{2}}\leq 6,$ see \cite{Du}. Then $S_f$ is a multiplier from ${\bf A}_{\alpha}^2$ to ${\bf A}_{\alpha+4}^2$ for any $f\in \mathcal{S}$. For the multiplier norms of the operator $M_{S_f}$,  Shimorin proved that
\begin{theorem}\label{th-sh}
Let $\alpha>-1$. Then
$$\|S_f\|_{{\bf {M}}_{\alpha, \alpha+4}}\leq 36\frac{\alpha+3}{\alpha+1},$$
holds for all $f\in \mathcal{S}$. 
\end{theorem}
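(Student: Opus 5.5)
The plan is to derive the estimate directly from the pointwise bound $|S_f(z)|(1-|z|^2)^2\le 6$, valid for all $f\in\mathcal{S}$ as recalled above. This needs no operator-theoretic machinery, only a comparison of the two weighted norms.

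Fix $f\in\mathcal{S}$ and $\phi\in{\bf A}^2_\alpha$. From the pointwise bound,
$$|S_f(z)\phi(z)|^2(1-|z|^2)^{\alpha+4}\le 36\,|\phi(z)|^2(1-|z|^2)^{\alpha},\qquad z\in\Delta.$$
Multiplying by $(\alpha+5)/\pi$ and integrating over $\Delta$ gives
$$\|S_f\phi\|_{\alpha+4}^2=(\alpha+5)\iint_\Delta |S_f\phi|^2(1-|z|^2)^{\alpha+4}\frac{dxdy}{\pi}\le 36\,\frac{\alpha+5}{\alpha+1}\,\|\phi\|_\alpha^2 .$$
Hence $\|S_f\|_{{\bf M}_{\alpha,\alpha+4}}\le 6\sqrt{(\alpha+5)/(\alpha+1)}$.

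It remains to compare this with the claimed constant. We need
$$6\sqrt{\frac{\alpha+5}{\alpha+1}}\le 36\,\frac{\alpha+3}{\alpha+1}.$$
Squaring both sides, this is equivalent to $(\alpha+5)(\alpha+1)\le 36(\alpha+3)^2$. That holds for every $\alpha>-1$, since $(\alpha+5)(\alpha+1)=(\alpha+3)^2-4<36(\alpha+3)^2$. This proves the inequality exactly as stated, with the multiplier norm taken as defined above.

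The one delicate point is interpretive rather than technical. Shimorin's bound is most naturally read as an estimate for the \emph{squared} norm, namely $\|S_f\phi\|_{\alpha+4}^2\le 36\frac{\alpha+3}{\alpha+1}\|\phi\|_\alpha^2$. Since $\alpha+3<\alpha+5$, that version is strictly sharper than the trivial bound above and requires genuine work.

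For that sharper version I would follow Shimorin's route. First, expand $\phi=\sum a_n z^n$ and write $\|\phi\|_\alpha^2=\sum |a_n|^2\,n!\,\Gamma(\alpha+2)/\Gamma(n+\alpha+2)$. Second, express $S_f\phi$ as $\big((f')^{-1/2}\big)''\cdot(-2)(f')^{1/2}\phi$ and use integration by parts, or Littlewood--Paley-type identities, to move derivatives onto $\phi$. Third, exploit univalence through the Grunsky inequalities, or through the area-theorem estimate for $\log\frac{f(z)-f(w)}{z-w}$, in the form of a reproducing-kernel inequality in ${\bf A}^2_\alpha$.

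I expect this last step to be the main obstacle: converting the Grunsky-type inequality into a weighted Bergman estimate with the exact constant $\frac{\alpha+3}{\alpha+1}$. For the statement as literally written, however, the pointwise argument above already suffices.
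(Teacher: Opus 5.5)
Your argument is correct for the inequality exactly as printed, and it takes a genuinely different, much more elementary route than the paper.

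The paper does not prove this theorem directly. It obtains it as the case $p=q=1$ of Theorem \ref{th-d}, using $S_f^{[1,1]}=\frac16 S_f$. It re-derives Theorem \ref{th-d} in Section 3 as follows:
\begin{enumerate}
\item expand $G_f^{[1,1]}(z,w)$ in Grunsky coefficients and apply the Grunsky inequality to its ${\bf A}^2_2$-norm in $z$;
\item bound the resulting series by $-S_\kappa^{[1,1]}(|w|)=(1-|w|^2)^{-2}$, the case $q=1$ of (\ref{key-ineq}), which gives $\|G_f^{[1,1]}(\cdot,w)\|_2^2\le (1-|w|^2)^{-2}$ (Lemma \ref{d-l});
\item follow Donaire's argument to pass to the multiplier estimate.
\end{enumerate}
That route yields $\|S_f\|^2_{{\bf M}_{\alpha,\alpha+4}}\le 36\frac{\alpha+3}{\alpha+1}$. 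This squared form is evidently what is intended. Theorem \ref{th-d} is stated for squared norms, and the Koebe value in Proposition \ref{pro-sh} is also a squared norm: it equals $36\,\mathcal{N}_{\alpha,1,1}$ from Theorem \ref{th-m}.

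Your pointwise argument only gives $\|S_f\|^2_{{\bf M}_{\alpha,\alpha+4}}\le 36\frac{\alpha+5}{\alpha+1}$. It proves the printed statement solely because the missing square makes the target so weak that $6\sqrt{(\alpha+5)/(\alpha+1)}$ fits under it.

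What each approach buys:
\begin{itemize}
\item Yours is self-contained and needs only $\|S_f\|_{\mathcal{B}_2}\le 6$. But it sees nothing beyond the pointwise size of $S_f$.
\item The paper's route exploits univalence through the Grunsky inequality. This improves $\alpha+5$ to $\alpha+3$, which is the substantive content of Shimorin's theorem and the form relevant to the application to $B(-2)$.
\end{itemize}
You diagnosed this interpretive issue correctly. However, your outline for the squared version stops exactly at the step that carries that content, namely turning the Grunsky-type bound into the weighted Bergman estimate. So it should not be counted as a proof of the squared inequality.
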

\begin{remark}By applying the above estimates, Shimorin found better bounds for the universal integral means spectrum $B(-2)$. $B(-2)=1$ is the famous Brennan conjecture. Later, using the similar ideas, Hedenmalm and Shimorin obtained the current best upper bound estimate about $B(-2)$ and other universal integral means spectra in the paper \cite{HS-1}. The reader can find more results of the topic about the universal integral means spectrum from \cite{Be, GM, HSo}.
\end{remark}
In a recent work \cite{Do}, Donaire extended Shimorin’s above estimates to the higher-order Schwarzian derivatives of the univalent functions. We next review the results established by Donaire in \cite{Do}. For any $f\in\mathcal{S}$, let $F$ be a function in $\Delta^2$ defined by  
 \[
F(z,w)=\log\frac{f(z)-f(w)}{z-w}\cdot\frac{z}{f(z)}\cdot\frac{w}{f(w)},\,\, {\text{if}}\,\, z\neq w;
\]
and \[
F(z,w)=\log(f'(z))\frac{z^2}{[f(z)]^2},\,\, {\text{if}}\,\, z=w.
\]

Let $p,q \in\mathbb{N}$ and $f\in \mathcal{S}$, we let
\[G_f^{[p,q]}(z,w)=\frac{\partial^{p+q}F}{\partial z^p \partial w^q},\]
and define 
\[S_f^{[p,q]}(z)=G_f^{[p,q]}(z,z).\]
We call $S_f^{[p,q]}$ the {\em higher-order Schwarzian derivatives} of $f$.  It is known that $S_f^{[p,q]}=S_f^{[q,p]}$ and $S_f^{[1,1]}=\frac{1}{6}S_f$. 

The following result was proved by Donaire in \cite{Do}.
\begin{theorem}\label{th-d}
Let $\alpha>-1, p,q\in \mathbb{N}$. Then 
$$\|S_f^{[p,q]}\|_{{\bf {M}}_{\alpha, \alpha+2p+2q}}^2\leq (2p-1)!(2q-1)!\frac{\alpha+2\min\{p,q\}+1}{\alpha+1},$$
holds for all $f\in \mathcal{S}$. 
\end{theorem}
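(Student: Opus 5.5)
The strategy is to go through Grunsky-type operators, which is how both Shimorin and Donaire argue. For $f\in\mathcal{S}$ the function $F(z,w)$ is analytic in $\Delta^2$, symmetric, and vanishes at $(0,0)$. Its Taylor coefficients $F(z,w)=\sum_{m,n\ge 0}c_{mn}z^m w^n$ are the Grunsky-type coefficients. The Grunsky inequality (in its Goluzin form, after the normalisation by $z/f(z)$ and $w/f(w)$) gives that the operator with kernel $\sqrt{mn}\,c_{mn}$ is a contraction on $\ell^2$. Equivalently, the bilinear form $\partial_z\partial_w F$ defines an operator of norm at most $1$ from the Bergman space $\mathbf{A}^2_0$ to its conjugate.

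\textbf{Step 1: reduce the multiplier norm to a bilinear estimate.} Fix $\phi\in\mathbf{A}^2_\alpha$. We want
$$\|S_f^{[p,q]}\phi\|_{\alpha+2p+2q}^2\le (2p-1)!(2q-1)!\,\frac{\alpha+2\min\{p,q\}+1}{\alpha+1}\,\|\phi\|_\alpha^2 .$$
By duality, write $\langle S^{[p,q]}_f\phi,\psi\rangle_{\alpha+2p+2q}$ as an integral of $G_f^{[p,q]}(z,z)\phi(z)\overline{\psi(z)}$. We then represent the restriction to the diagonal as an integral over $\Delta^2$. Concretely, we use a reproducing-kernel identity for $\mathbf{A}^2_\beta$: a function $H(z,w)$ analytic on $\Delta^2$ has diagonal values $H(z,z)$ recoverable by pairing against products $K_\alpha(\cdot,z)K_\beta(\cdot,z)$.

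\textbf{Step 2: estimate the two-variable pairing.} Expand $G^{[p,q]}_f$ in the coefficients,
$$G^{[p,q]}_f(z,w)=\sum c_{mn}\,m^{\underline p}\, n^{\underline q}\, z^{m-p}w^{n-q},$$
where $m^{\underline p}$ is the falling factorial. The diagonal operator then becomes a sum over $k=m+n-p-q$ of convolution-type terms. We apply Cauchy--Schwarz in the form
$$\Big|\sum c_{mn}a_m b_n\Big|\le \Big(\sum |a_m|^2/m\Big)^{1/2}\Big(\sum|b_n|^2/n\Big)^{1/2},$$
which is exactly the Grunsky contraction. The factors $m^{\underline p}/\sqrt m$ are then absorbed by weighted Bergman norms. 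The relevant identity is that $\sum_m |a_m|^2 (m^{\underline p})^2/m$ is comparable to a weighted Bergman norm with weight $(1-|z|^2)^{2p-1}$, with exact constant $\sim (2p-1)!$. This is the source of the factors $(2p-1)!$ and $(2q-1)!$ via $\int_0^1 r^{2m}(1-r^2)^{2p-1}\,dr$ Beta integrals.

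\textbf{Step 3: optimize the splitting between the two variables.} The weight $(1-|z|^2)^{\alpha+2p+2q}$ on the diagonal has to be split between the $z$- and $w$-factors, and $\phi$ has to be placed on the side carrying the smaller exponent. Choosing the side of $\min\{p,q\}$ should produce the ratio of Beta-function constants $\frac{\alpha+2\min\{p,q\}+1}{\alpha+1}$, generalising Shimorin's factor $\frac{\alpha+3}{\alpha+1}$ for $p=q=1$. As a consistency check, $(1!)^2\cdot\frac{\alpha+3}{\alpha+1}\cdot 36=36\frac{\alpha+3}{\alpha+1}$, matching Theorem~\ref{th-sh} once we use $S_f=6S_f^{[1,1]}$.

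\textbf{Main obstacle.} The hard part is Step 3: getting the sharp constant rather than a comparable one. This needs an exact computation of the norm of the diagonal-restriction operator from the tensor-product space $\mathbf{A}^2_{a}\otimes\mathbf{A}^2_{b}$ to $\mathbf{A}^2_{a+b+2}$, done coefficientwise via Beta integrals, followed by a check that the resulting supremum over $k$ is attained in the limit and equals the stated ratio. I would first do the case $p=q=1$ carefully to match Shimorin, then run the same inequality with general exponents.
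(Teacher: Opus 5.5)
Your ingredients (the Grunsky inequality, restriction of a two-variable function to the diagonal, and placing $\phi$ on the $\min\{p,q\}$ side) are the right ones. The gap is that the plan never supplies the estimate that produces the constant, and it looks for that constant in the wrong place. Restriction to the diagonal, $\mathbf{A}^2_a\otimes\mathbf{A}^2_b\to\mathbf{A}^2_{a+b+2}$, is simply a contraction, because $(1-z\bar\zeta)^{-(a+2)}(1-z\bar\zeta)^{-(b+2)}$ is the reproducing kernel of $\mathbf{A}^2_{a+b+2}$. The coefficientwise computation you describe returns norm exactly $1$ for every split, so no supremum over $k$ will produce $\frac{\alpha+2\min\{p,q\}+1}{\alpha+1}$. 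That ratio is just a quotient of normalising constants. Assume $q=\min\{p,q\}$ (recall $S_f^{[p,q]}=S_f^{[q,p]}$) and apply the contraction to $H(z,w)=G_f^{[p,q]}(z,w)\phi(w)$ with $a=2p-2$, $b=\alpha+2q$. Writing $dA$ for area measure, this gives
\begin{align*}
\|S_f^{[p,q]}\phi\|_{\alpha+2p+2q}^2&\le(\alpha+2q+1)\iint_{\Delta}\|G_f^{[p,q]}(\cdot,w)\|_{2p-2}^2\,|\phi(w)|^2(1-|w|^2)^{\alpha+2q}\,\frac{dA(w)}{\pi}\\
&\le(2p-1)!\,(2q-1)!\,\frac{\alpha+2q+1}{\alpha+1}\,\|\phi\|_{\alpha}^2 .
\end{align*}
The second inequality requires the pointwise bound $\|G_f^{[p,q]}(\cdot,w)\|_{2p-2}^2\le(2p-1)!\,(2q-1)!\,(1-|w|^2)^{-2q}$. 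This is Lemma~\ref{d-l}, whose $\mathbf{A}^2_{2p}$ is the space with weight $(1-|z|^2)^{2p-2}$ in the normalisation of the introduction. It is precisely what the paper reproves before deferring to Donaire's argument for the passage to the multiplier norm.

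Your Step 2 does not deliver that pointwise bound, for three reasons.

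First, the $z$-side weight must be $(1-|z|^2)^{2p-2}$, not $(1-|z|^2)^{2p-1}$. This is forced by $a+b+2=\alpha+2p+2q$ with $b=\alpha+2q$, which is needed so that $(1-|w|^2)^{-2q}$ cancels to leave $(1-|w|^2)^{\alpha}$. It is also the weight for which
$\|\sum_m x_m m^{\underline p}z^{m-p}\|_{2p-2}^2=(2p-1)!\sum_m\frac{m(m-1)\cdots(m-p+1)}{(m+1)\cdots(m+p-1)}|x_m|^2\le(2p-1)!\sum_m m|x_m|^2$
holds with the exact constant. Mere comparability, as you state it, would lose that constant.

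Second, Grunsky must be applied in $z$ with $w$ frozen, to $x_m=\sum_n c_{mn}n^{\underline q}w^{n-q}$. Applying the bilinear form to $\langle S_f^{[p,q]}\phi,\psi\rangle$, as your Step 2 proposes, fails. The coefficient of $c_{mn}$ there is $m^{\underline p}n^{\underline q}\langle z^{m+n-p-q}\phi,\psi\rangle$, which does not split as $a_mb_n$.

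Third, Grunsky leaves the series $\sum_{n\ge q}\frac{(n^{\underline q})^2}{n}|w|^{2(n-q)}$, which must be bounded by exactly $(2q-1)!\,(1-|w|^2)^{-2q}$. This is the paper's contribution: the series equals $-S_\kappa^{[q,q]}(|w|)$, and (\ref{key-ineq}) gives the bound; (\ref{key-ineq}) itself comes from the explicit formula of Lemma~\ref{key-l} via Chu--Vandermonde. A termwise comparison would also work, since the ratio of the coefficients of $|w|^{2(n-q)}$ on the two sides is $\frac{(n-1)\cdots(n-q+1)}{(n+1)\cdots(n+q-1)}\le 1$.
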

\begin{remark}
We can obtain Theorem \ref{th-sh} when we take $p=q=1$ in Theorem \ref{th-d}.
\end{remark}

On the other hand, the following sharp multiplier norm estimate for the Koebe function also has been obtained in \cite{SS} by Shimorin. There is a close relationship between this sharp multiplier norm estimate and the Brennan conjecture, see \cite[Proposition 8]{SS}. We denote by $\kappa$ the well-known Koebe function, which is defined by
 $$\kappa(z):=\frac{z}{(1-z)^2}, z\in \Delta.$$ 
 \begin{proposition}\label{pro-sh}
 Let $\alpha>-1$. We have
\begin{equation}\|S_{\kappa}\|_{{\bf {M}}_{\alpha, \alpha+4}}=\frac{36(\alpha+3)(\alpha+5)}{(\alpha+2)(\alpha+4)}.\nonumber \end{equation}
 \end{proposition}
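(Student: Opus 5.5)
The first step is to compute the symbol explicitly. For $\kappa(z)=z/(1-z)^2$ we have $\kappa'(z)=(1+z)/(1-z)^3$, so
$$N_\kappa(z)=\frac{1}{1+z}+\frac{3}{1-z}, \qquad S_\kappa(z)=-\frac{6}{(1-z^2)^2}.$$
The problem therefore reduces to computing the multiplier norm of $g(z)=(1-z^2)^{-2}$ from ${\bf A}_\alpha^2$ to ${\bf A}_{\alpha+4}^2$, and multiplying by $36$. Comparing with Theorem \ref{th-sh}, I read the stated quantity as the \emph{squared} norm, and I will prove it in that form.

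For the upper bound I would expand in the orthogonal basis $\{z^n\}$. Recall $\|z^n\|_\alpha^2=\frac{n!\,\Gamma(\alpha+2)}{\Gamma(n+\alpha+2)}$. Since $g(z)=\sum_{k\ge0}(k+1)z^{2k}$ is even, $M_g$ maps even functions to even functions and odd functions to odd functions. Because even and odd parts are orthogonal in every ${\bf A}_\beta^2$, the squared norm is the larger of the squared norms of the two restrictions. On each parity class I write $\phi(z)=\psi(z^2)$ or $\phi(z)=z\psi(z^2)$, and represent $M_g$ as a lower-triangular Toeplitz-type matrix with entries $(k+1)$ acting on the weighted coefficient sequences. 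The aim is to apply the sharp theorem from our earlier work, which I take to give exact norms for multiplication operators whose symbols are powers such as $(1-z)^{-2}$ (equivalently, $\mathcal{B}_2$-symbols with Taylor coefficients $n+1$). If that theorem applies directly only to the symbol $(1-z)^{-2}$ on ${\bf A}_\alpha^2$, I would transport the problem through the substitution $w=z^2$. This turns the two parity classes into weighted spaces with weights $(1-|w|)^{\alpha}|w|^{\mp1/2}$, and near the boundary these weights are comparable to Bergman weights. The extremal behaviour comes only from near $w=1$, where the substitution is locally a diffeomorphism. I expect the value $\frac{(\alpha+3)(\alpha+5)}{(\alpha+2)(\alpha+4)}$ to appear as a ratio of Gamma or Beta factors of the form $\frac{\Gamma(\alpha+2)\Gamma(\alpha+6)}{\Gamma(\alpha+3)\Gamma(\alpha+5)}\cdot\frac{\cdots}{\cdots}$ from this coefficient computation.

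For the lower bound I would use test functions concentrated at the singularity $z=1$, namely
$$\phi_\varepsilon(z)=(1-z)^{-(\alpha+2)/2+\varepsilon}, \qquad \varepsilon\to0^+.$$
Near $z=1$ we have $|1-z^2|^{-4}\approx 2^{-4}|1-z|^{-4}$. Both $\|\phi_\varepsilon\|_\alpha^2$ and $\|g\phi_\varepsilon\|_{\alpha+4}^2$ blow up like $C/\varepsilon$, and their explicit coefficient expansions (hypergeometric sums) give the ratio of the leading constants. I would check that this ratio equals the claimed constant, which shows that the upper bound is attained in the limit.

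The main obstacle is the upper bound: showing that no $\phi$ does better than the boundary-concentrated test functions. In particular, the interaction of the two singular points $\pm1$, and the lower-order terms coming from $(1+z)^{-2}$, must not increase the norm. I would handle this first by the parity splitting, which separates the two singularities, and then by a Schur-test or matrix-majorization argument on the coefficient matrix of each parity part, comparing it entrywise to the model operator whose norm the earlier theorem computes exactly.
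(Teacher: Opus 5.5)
\textbf{What is correct.} Your reduction is right: $S_\kappa=-6(1-z^2)^{-2}$, and the displayed constant is the \emph{square} of the norm. This matches Theorem \ref{th-m}, which gives the norm as $\sqrt{\mathcal{N}_{\alpha,1,1}}$ with $\mathcal{N}_{\alpha,1,1}=\frac{(\alpha+3)(\alpha+5)}{(\alpha+2)(\alpha+4)}$.

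Your lower bound also works, and it is a mild variant of the paper's $(1-rz^2)^{-\lambda}$, with $r\to1^-$ and then $\lambda\to(\alpha/2+1)^+$. With $\lambda=\frac{\alpha+2}{2}-\varepsilon$, split $(1+z)^{-2}=\frac14+\frac{(1-z)(3+z)}{4(1+z)^2}$. The remainder $\frac{3+z}{4(1+z)^2}(1-z)^{-1-\lambda}$ stays bounded in $\mathbf{A}^2_{\alpha+4}$. The leading ratio then tends to $\frac{1}{16}\frac{\Gamma(\alpha+6)}{\Gamma(\alpha+2)}\bigl[\frac{\Gamma(1+\alpha/2)}{\Gamma(3+\alpha/2)}\bigr]^2$, which is the claimed value.

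\textbf{The gap: the upper bound.} This is the whole content of the statement, and none of the mechanisms you list produces a \emph{sharp} constant.

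First, the substitution $w=z^2$. The resulting weights are, up to constants, $(1-|w|)^{\alpha}|w|^{-1}$ and $(1-|w|)^{\alpha}$, not $|w|^{\mp1/2}$. Their comparability with Bergman weights only gives equivalence of norms up to constants. Your claim that ``the extremal behaviour comes only from near $w=1$'' is exactly what needs proof.

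Second, entrywise majorization cannot give the sharp constant.
\begin{itemize}
\item Dominating by the matrix of $(1-z)^{-2}$ is legitimate, since $\frac{m-n}{2}+1\le m-n+1$. But your own test functions show that $M_{(1-z)^{-2}}$ has squared norm at least $16$ times the target.
\item In the $w$-picture, the squared orthonormal-basis entries of the even part equal $\rho_{\alpha+4}(m)/\rho_\alpha(n)$ times those of $M_{(1-w)^{-2}}:\mathbf{A}^2_\alpha\to\mathbf{A}^2_{\alpha+4}$. Here $\rho_\gamma(k)=\frac{(2k)!\,\Gamma(k+\gamma+2)}{k!\,\Gamma(2k+\gamma+2)}$ decreases from $1$ to $2^{-(\gamma+1)}$. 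A sharp entrywise comparison would need this factor to be at most $2^{-4}$, but it equals $1$ at $m=n=0$.
\end{itemize}
A Schur test could in principle be exact. It would require explicit test sequences and inequalities verified for every index, not only asymptotically, and you supply neither.

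\textbf{The missing ingredient.} The earlier result the paper uses, Lemma \ref{rec-2}, computes the exact (squared) multiplier norm of $g_0(z)=(1-z^2)^{-(\beta-\alpha)/2}$ itself, not of $(1-z)^{-2}$. With $\beta=\alpha+4$, your symbol is literally $-S_\kappa/6=g_0$. Hence
$\|S_\kappa\|^2_{\mathbf{M}_{\alpha,\alpha+4}}=\frac{36}{16}\frac{\Gamma(\alpha+6)}{\Gamma(\alpha+2)}\bigl[\frac{\Gamma(1+\alpha/2)}{\Gamma(3+\alpha/2)}\bigr]^2=\frac{36(\alpha+3)(\alpha+5)}{(\alpha+2)(\alpha+4)}$
follows immediately, with no parity splitting and no separate lower bound. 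This is the case $p=q=1$ of Theorem \ref{th-m}, where $S^{[1,1]}_\kappa=-(1-z^2)^{-2}$.

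If you want to avoid citing that lemma, you need a genuinely exact argument for $g_0$. One natural route uses the fact that $(1-|z|^2)/|1-z^2|$ is invariant under $z\mapsto(z+t)/(1+tz)$. Then $M_{g_0}^*M_{g_0}$ is a Toeplitz operator on $\mathbf{A}^2_\alpha$ that commutes with the corresponding unitary group, and you would diagonalize that group.
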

 
A natural question is what is the sharp multiplier norm estimates for the higher-order Schwarzian derivatives of the Koebe function. In this note, we will answer this question and establish the sharp multiplier norm estimates for the higher Schwarzians of the Koebe function, which is a generalization of Proposition \ref{pro-sh}.  
 
The rest of the paper is organized as follows. In the next section, we will present the main result of this paper and its proof. The proof relies on an explicit formula for the higher-order Schwarzian derivatives of the Koebe function and a recent theorem from our earlier work.  In Section 3, we present the final remarks, which mainly provide a new proof of Theorem \ref{th-d} and point out that the Koebe function is still the extremal function for certain higher-order Schwarzians of the univalent functions. 
 
\section{{\bf Main result and its proof}}
The main result of this note is the following theorem.
\begin{theorem}\label{th-m}
Let $\alpha>-1, p,q\in \mathbb{N}$. Then 
$$\|S_{\kappa}^{[p,q]}\|_{{\bf {M}}_{\alpha, \alpha+2p+2q}}=\sqrt{{\mathcal{N}}_{\alpha, p, q}}.$$
Where, $${\mathcal{N}}_{\alpha, p, q}=\frac{[(p+q-1)!]^2}{2^{2p+2q}}\frac{\Gamma(\alpha+2+2p+2q)}{\Gamma(\alpha+2)}\Big[\frac{\Gamma(\alpha/2+1)}{\Gamma(\alpha/2+1+p+q)}\Big]^2.$$
\end{theorem}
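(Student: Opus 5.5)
The plan has three steps: compute $S_{\kappa}^{[p,q]}$ in closed form, reduce to the multiplier norm of an explicit symbol of the form $z^{m}(1-z^2)^{-(p+q)}$, and evaluate that norm.

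\textbf{Step 1: closed form of $F$ for the Koebe function.} For $\kappa(z)=z/(1-z)^2$ one checks directly that
$$\kappa(z)-\kappa(w)=\frac{(z-w)(1-zw)}{(1-z)^2(1-w)^2}.$$
Hence, after the normalizing factors $z/\kappa(z)=(1-z)^2$ and $w/\kappa(w)=(1-w)^2$ cancel, the two-variable function in the definition of the higher-order Schwarzians is simply
$$F(z,w)=\log(1-zw).$$
The diagonal case $z=w$ is consistent with this, and it also gives $S_\kappa^{[p,q]}=S_\kappa^{[q,p]}$ for free.

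\textbf{Step 2: the explicit formula for $S_\kappa^{[p,q]}$.} First, $\partial_w^q\log(1-zw)=-(q-1)!\,z^q(1-zw)^{-q}$. I then want to show that
$$S_\kappa^{[p,q]}(z)=-\,(p+q-1)!\,\frac{z^{|p-q|}}{(1-z^2)^{p+q}}.$$
This is consistent with the known cases: $p=q=1$ gives $-1/(1-z^2)^2=\frac16 S_\kappa$, and $(p,q)=(1,2)$ gives $-2z/(1-z^2)^3$ by a direct computation. I would prove the general formula in one of two ways. The first is induction on $p$ (taking $p\le q$ by symmetry), applying $\partial_z$ to a closed form of $\partial_z^{p-1}\partial_w^q\log(1-zw)$ of the type $z^{q-p+1}P(zw)(1-zw)^{-(p+q-1)}$. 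The second is to use the power series
$$-\sum_{n\ge1}\frac{z^nw^n}{n},$$
differentiate termwise, and identify the resulting diagonal series
$$-\sum_n\frac{n!\,(n-1)!}{(n-p)!\,(n-q)!}\,z^{2n-p-q}$$
with the binomial expansion of $z^{|p-q|}(1-z^2)^{-(p+q)}$ via a Vandermonde-type identity. This identification is the step I expect to be the main combinatorial obstacle. If it fails as stated, the fallback is to keep the exact diagonal series (a polynomial in $z^2$ over $(1-z^2)^{p+q}$) and carry it into Step 3.

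\textbf{Step 3: the multiplier norm.} The symbol now has the form $g(z)=c\,z^m(1-z^2)^{-\gamma}$ with $\gamma=p+q$ and $c=-(p+q-1)!$. Here I would invoke the theorem from the author's earlier work, which computes multiplier norms from ${\bf A}^2_\alpha$ to ${\bf A}^2_{\alpha+2\gamma}$ for such radially structured symbols. Concretely:
\begin{itemize}
\item Decompose $\phi$ by residues of its Taylor exponents modulo $2$. Since $g$ is, up to the factor $z^m$, a function of $z^2$, the operator $M_g$ respects this splitting and the norm reduces to a supremum over monomial-type test functions.
\item Compare the weighted Bergman norms of $z^m(1-z^2)^{-\gamma}z^k$ with those of $z^k$. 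Passing to the variable $z^2$ halves the weight parameter, which is why $\Gamma(\alpha/2+1)$ and $\Gamma(\alpha/2+1+p+q)$ appear.
\item Show that the resulting supremum is attained at the lowest mode (or in the limit). This gives
$$\|g\|^2=c^2\,2^{-2\gamma}\,\frac{\Gamma(\alpha+2+2\gamma)}{\Gamma(\alpha+2)}\Big[\frac{\Gamma(\alpha/2+1)}{\Gamma(\alpha/2+1+\gamma)}\Big]^2={\mathcal N}_{\alpha,p,q}.$$
\end{itemize}

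As a sanity check, with $p=q=1$ this becomes $36$ times $\frac{(\alpha+3)(\alpha+5)}{(\alpha+2)(\alpha+4)}$ up to the normalization $S^{[1,1]}=\frac16S$, which recovers Proposition \ref{pro-sh}.
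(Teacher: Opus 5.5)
\textbf{There is a genuine gap.} Your Step 2 formula is false as soon as $\min\{p,q\}\ge 2$, and what you call the fallback is exactly where the proof has to happen. For $p=q=2$, differentiate $\partial_w^2\log(1-zw)=-z^2(1-zw)^{-2}$ twice in $z$ and set $w=z$. This gives
\[
S_\kappa^{[2,2]}(z)=-\frac{2(1+2z^2)}{(1-z^2)^4}\neq-\frac{3!}{(1-z^2)^4}.
\]
Already at $z=0$ your own diagonal series gives $-q!\,(q-1)!=-2$, not $-(2q-1)!=-6$.

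In general (say $p\ge q$) one has $S_\kappa^{[p,q]}(z)=-p!\,z^{p-q}T(z^2)(1-z^2)^{-(p+q)}$, where $T$ is a polynomial of degree $q-1$. Your constant $(p+q-1)!$ is only $p!\,T(1)$, the leading singular coefficient at $z=\pm1$.

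The upper bound then needs a fact your proposal never supplies: the coefficients of $T$ are nonnegative. The paper obtains them from Chu--Vandermonde as $C_d=\frac{(p-1)!}{(p-q)!}\frac{(-q)_d(1-q)_d}{d!\,(p-q+1)_d}$, and sums them, again by Chu--Vandermonde, to $T(1)=(p+q-1)!/p!$. Only this gives the pointwise domination $|S_\kappa^{[p,q]}(z)|\le(p+q-1)!\,|1-z^2|^{-(p+q)}$ on all of $\Delta$. After that, the known multiplier norm of $(1-z^2)^{-(p+q)}$ from the earlier paper yields the bound $\sqrt{\mathcal N_{\alpha,p,q}}$. Positivity of the Taylor coefficients of $S_\kappa^{[p,q]}$ itself would not do. It controls $|S_\kappa^{[p,q]}(z)|$ only in terms of $1-|z|^2$, which does not give domination by $|1-z^2|^{-(p+q)}$.

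Step 3 also fails as described. $M_g$ is not diagonal in the monomial basis, so its norm is not a supremum over monomials. For $g=c\,z^m(1-z^2)^{-\gamma}$, the ratios $\|g\,z^k\|_{\alpha+2\gamma}^2/\|z^k\|_\alpha^2$ decay like $1/k$. The lowest mode is not extremal either: for $p=q=1$, $\alpha=0$, the constant test function gives $\|(1-z^2)^{-2}\|_4^2=20\ln2-\tfrac{25}{2}\approx1.36$, whereas $\mathcal N_{0,1,1}=\tfrac{15}{8}$. The sharp value is reached only asymptotically, by test functions concentrating at $z=\pm1$.

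The paper's lower bound proceeds as follows:
\begin{itemize}
\item It uses that dilations do not increase the multiplier norm, and tests $S_\kappa^{[p,q]}(\sqrt r\,z)$ against $(1-rz^2)^{-\lambda}$ with $\alpha+2<2\lambda<\alpha+3$.
\item It splits $T(t)(1-t)^{-(p+q)}=\sum_j A_j(1-t)^{-(p+q-j)}$ with $A_0=T(1)$.
\item It checks that the terms with $j\ge1$ stay bounded, while the main term and $\|(1-rz^2)^{-\lambda}\|_\alpha^2$ both blow up like $(1-r^2)^{-(2\lambda-\alpha-2)}$.
\item It then lets $r\to1^-$ and $\lambda\downarrow\alpha/2+1$.
\end{itemize}
Some such argument is needed even if you cite the earlier theorem as a black box, because the true symbol is not a constant multiple of $(1-z^2)^{-(p+q)}$.
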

\begin{remark}Here, $\Gamma$ is the well-known Gamma function, see \cite{AAR}. Proposition \ref{pro-sh} will follow if we take $p=q=1$ in Theorem \ref{th-m}. 
\end{remark}
We will establish an explicit formula for the higher-order Schwarzian derivatives of the Koebe function. Let $n$ be a nonnegative integer and $a$ be a complex number. We define the shifted factorial $(a)_n$ by
  \[(a)_n=a(a+1)\cdots(a+n-1),\]
  for $n>0$, and $(a)_0=1$. We shall use the following Chu-Vandermonde identity for hypergeometric series, see \cite{AAR}.
\begin{lemma}\label{lemma-m}Let $n$ be a nonnegative integer. Let $c$ be a complex number with $(c)_m\neq 0$ for all $m=0,1,\cdots,n$, then we have
\[{}_2F_1\left(\begin{matrix} -n, a \\ c \end{matrix}; 1\right)=\sum_{m=0}^{n}\frac{(-n)_m (a)_m}{m! (c)_m}=\frac{(c-a)_n}{(c)_n}.\]
\end{lemma}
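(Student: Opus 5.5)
We plan to reduce the identity to the Vandermonde convolution for falling factorials, which in turn follows from the binomial series. For a complex number $x$ and an integer $k\geq 0$ we write $x^{\underline{k}}=x(x-1)\cdots(x-k+1)$, with $x^{\underline{0}}=1$.

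\emph{Step 1: rewrite the summand.} Since the hypothesis gives $(c)_n\neq 0$, we may multiply both sides of the lemma by $(c)_n$. We then use three elementary facts:
\[
\frac{(-n)_m}{m!}=(-1)^m\binom{n}{m},\qquad (-1)^m(a)_m=(-a)^{\underline{m}},\qquad \frac{(c)_n}{(c)_m}=(c+m)_{n-m}=(c+n-1)^{\underline{n-m}}.
\]
The first two are immediate from the definitions. The third holds because $(c+m)(c+m+1)\cdots(c+n-1)$ is the same product as $(c+n-1)(c+n-2)\cdots(c+m)$, read in reverse order.

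With these, the claimed identity becomes
\[
\sum_{m=0}^{n}\binom{n}{m}(-a)^{\underline{m}}\,(c+n-1)^{\underline{n-m}}=(c-a)_n .
\]
The right-hand side can be rewritten the same way: $(c-a)_n=(c-a+n-1)^{\underline{n}}$. Setting $x=-a$ and $y=c+n-1$, so that $x+y=c-a+n-1$, it therefore suffices to prove
\[
\sum_{m=0}^{n}\binom{n}{m}x^{\underline{m}}\,y^{\underline{n-m}}=(x+y)^{\underline{n}}.
\]

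\emph{Step 2: prove the falling-factorial Vandermonde identity.} The binomial series $(1+t)^x=\sum_{k\ge0}x^{\underline{k}}\,t^k/k!$ holds for $|t|<1$ and every complex $x$. We take the identity $(1+t)^x(1+t)^y=(1+t)^{x+y}$ and compare the coefficients of $t^n$ on both sides. The left side gives $\sum_{m}x^{\underline{m}}y^{\underline{n-m}}/(m!\,(n-m)!)$ and the right side gives $(x+y)^{\underline{n}}/n!$. Multiplying by $n!$ yields exactly the displayed identity.

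As an alternative route, both sides are polynomials in $(x,y)$. For nonnegative integers $x,y$ the identity is the usual count of ordered selections of $n$ elements from a disjoint union of sets of sizes $x$ and $y$. Two polynomials that agree on $\mathbb{N}^2$ agree everywhere, so the identity holds for all complex $x,y$.

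\emph{Step 3: conclude.} We substitute back $x=-a$ and $y=c+n-1$ and divide by $(c)_n\neq0$. The hypothesis $(c)_m\neq 0$ for $m\le n$ guarantees that every term of the hypergeometric sum is defined, so the division is legitimate term by term.

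The only step needing care is the bookkeeping in Step 1: converting rising factorials to falling ones, and checking that the shifted index $c+n-1$ produces exactly $(c-a)_n$ on the right. Everything else is routine.
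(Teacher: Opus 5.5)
Your proof is correct, and every conversion in Step~1 checks out: $(-n)_m/m!=(-1)^m\binom{n}{m}$, $(-1)^m(a)_m=(-a)^{\underline{m}}$, $(c)_n/(c)_m=(c+n-1)^{\underline{n-m}}$ and $(c-a)_n=(c-a+n-1)^{\underline{n}}$. So the claim is exactly the falling-factorial Vandermonde convolution with $x=-a$, $y=c+n-1$, and you establish that correctly, either from $(1+t)^x(1+t)^y=(1+t)^{x+y}$ or by the polynomial/counting argument.

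The paper gives no proof of this lemma; it simply quotes the Chu--Vandermonde identity from Andrews--Askey--Roy. The standard derivation there takes it as the terminating case $b=-n$ of Gauss's summation formula ${}_2F_1(b,a;c;1)=\Gamma(c)\Gamma(c-a-b)/[\Gamma(c-a)\Gamma(c-b)]$. That formula rests on Euler's integral representation and the Beta integral, and its convergence restriction ($\operatorname{Re}(c-a+n)>0$ in the terminating case) is removed afterwards because both sides are rational in $c$.

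Your route replaces that analytic machinery with pure algebra. It is self-contained and uses no Gamma functions. It also shows that $\sum_{m=0}^{n}(-1)^m\binom{n}{m}(a)_m(c+m)_{n-m}=(c-a)_n$ is a polynomial identity in $a$ and $c$. Hence the hypothesis on $c$ is needed only to make the terms of the hypergeometric sum defined. In fact $(c)_n\neq 0$ alone suffices, since $(c)_n=(c)_m(c+m)_{n-m}$. The Gauss-formula route is heavier for a terminating sum, but it places the identity within the general evaluation of ${}_2F_1$ at $z=1$.
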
 
We now present the key lemma of this paper.
\begin{lemma}\label{key-l}Let $p,q\in \mathbb{N}$ with $p\geq q$. Then \begin{equation}\label{m-lemma-eq}S_{\kappa}^{[p,q]}(z)=-p!\frac{z^{p-q}}{(1-z^2)^{p+q}}\sum_{d=0}^{q-1}C_d z^{2d},\,z\in \Delta.\end{equation}
Here, \begin{equation}\label{m-lemma-eq-1}C_d:=\frac{(p-1)!}{(p-q)!}\frac{(-q)_d(1-q)_d}{d!(p-q+1)_d},\,0\leq d\leq q-1.\end{equation}
\end{lemma}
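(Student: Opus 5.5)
The plan is to compute the two-variable function $F$ for $\kappa$ in closed form, differentiate it directly, and then reorganize the resulting finite sum with Lemma \ref{lemma-m}.

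\emph{Step 1: closed form of $F$.} A direct computation gives
\[
\kappa(z)-\kappa(w)=\frac{z(1-w)^2-w(1-z)^2}{(1-z)^2(1-w)^2}=\frac{(z-w)(1-zw)}{(1-z)^2(1-w)^2}.
\]
Since $z/\kappa(z)=(1-z)^2$, this yields
\[
F(z,w)=\log(1-zw),
\]
and the formula is consistent on the diagonal $z=w$.

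\emph{Step 2: differentiation.} Differentiating in $w$ first gives
\[
\partial_w^q\log(1-zw)=-(q-1)!\,z^q(1-zw)^{-q}.
\]
Applying $\partial_z^p$ with the Leibniz rule to the product $z^q\cdot(1-zw)^{-q}$ gives
\[
G^{[p,q]}_\kappa(z,w)=-(q-1)!\sum_{k=0}^{\min(p,q)}\binom{p}{k}\frac{q!}{(q-k)!}\,z^{q-k}\,(q)_{p-k}\,w^{p-k}\,(1-zw)^{-(p+q-k)}.
\]
Setting $w=z$ and putting everything over the common denominator $(1-z^2)^{p+q}$ gives
\[
S^{[p,q]}_\kappa(z)=-\frac{(q-1)!}{(1-z^2)^{p+q}}\sum_{k=0}^{q}\binom{p}{k}\frac{q!}{(q-k)!}(q)_{p-k}\,z^{p+q-2k}(1-z^2)^{k}.
\]
Here $p\ge q$, so $k$ runs from $0$ to $q$.

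\emph{Step 3: collecting coefficients.} Expand $(1-z^2)^k$ binomially. The monomial $z^{p+q-2k+2j}$ equals $z^{p-q+2d}$ exactly when $j=k+d-q$. Hence the numerator is $z^{p-q}$ times a polynomial in $z^2$, and the coefficient of $z^{p-q+2d}$ is
\[
\sum_{k=q-d}^{q}\binom{p}{k}\frac{q!}{(q-k)!}(q)_{p-k}\binom{k}{k+d-q}(-1)^{k+d-q}.
\]
Reindex with $m=q-k$, so that $0\le m\le d$. Rewrite each factorial ratio as a shifted factorial in $m$, for example $\binom{p}{q-m}$, $(q)_{p-q+m}$ and $\binom{q-m}{d-m}$. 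The sum then becomes a constant depending on $d$ times a terminating ${}_2F_1(-d,a;c;1)$. Lemma \ref{lemma-m} evaluates this as $(c-a)_d/(c)_d$.

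Two consequences are needed. First, for $d=q$ the value must vanish, so the top power $z^{p+q}$ cancels. This is why the polynomial has degree only $q-1$ in $z^2$. Second, for $0\le d\le q-1$ the resulting constants, together with the prefactor $-(q-1)!$, must simplify to $p!\,C_d$ with $C_d$ as in \eqref{m-lemma-eq-1}.

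\emph{Main obstacle.} The delicate step is the bookkeeping in Step 3: choosing the parameters $a$ and $c$ correctly so that the sum matches Lemma \ref{lemma-m}, and checking the hypothesis $(c)_m\neq0$. As a sanity check I would verify the case $p=q=1$. There the formula gives
\[
S^{[1,1]}_\kappa=-\frac{1}{(1-z^2)^2}=\frac{1}{6}S_\kappa,
\]
which agrees with $S_\kappa(z)=-6/(1-z^2)^2$.

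If the direct Leibniz route becomes messy, an alternative is to expand
\[
\log(1-zw)=-\sum_{n\ge1}\frac{(zw)^n}{n},
\]
differentiate term by term, and identify the coefficients of $z^{p-q}(1-z^2)^{-(p+q)}$ times a polynomial by comparing power series. Lemma \ref{lemma-m} would again be the tool for the final summation.
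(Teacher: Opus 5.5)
Your proposal is correct and follows essentially the same route as the paper: the closed form $\log(1-zw)$, the Leibniz rule, setting $w=z$, binomial expansion, and Chu--Vandermonde; the only difference is that you differentiate in $w$ first, while the paper differentiates in $z$ first. This ordering makes the bookkeeping you flagged clean: with $m=q-k$ the coefficient of $z^{p-q+2d}$ is $\frac{p!\,q\,(p-1)!\,(-1)^d}{(q-d)!\,(p-q)!\,d!}\sum_{m=0}^{d}\frac{(-d)_m(p)_m}{m!\,(p-q+1)_m}$, so Lemma \ref{lemma-m} applies with $a=p$ and $c=p-q+1\geq 1$ (so $(c)_m\neq 0$ automatically), giving the factor $(1-q)_d/(p-q+1)_d$, which vanishes at $d=q$ and, using $(-1)^d q!/(q-d)!=(-q)_d$ and the prefactor $-(q-1)!$, yields exactly $-p!\,C_d$.
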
 

\begin{proof}
We let  
 $$K(z,w):=\log\frac{\kappa(z)-\kappa(w)}{z-w}\frac{z}{\kappa(z)}\frac{w}{\kappa(w)}.$$
Then $K(z,w)=\log (1-zw)$ and for $p\geq q$, we first obtain that
$$\frac{\partial^{p}K}{\partial z^p}=-(p-1)!w^p(1-zw)^{-p},$$
so that
\begin{eqnarray}\lefteqn{\frac{\partial^{p+q}K}{\partial z^p \partial w^q}=-(p-1)!\frac{\partial^{p}}{\partial w^q}\Big(w^p(1-zw)^{-p}\Big)}\nonumber \\
&&=-(p-1)!\sum_{a=0}^{q}\binom{q}{a}(w^p)^{(a)}\frac{\partial^{q-a}}{\partial w^{q-a}}[(1-zw)^{-p}].\nonumber
\end{eqnarray}
Note that $$(w^p)^{(a)}=p(p-1)\cdots (p-a+1)w^{p-a}=p(p-1)\cdots (p-a+1)w^{p-a},$$
and 
\[\frac{\partial^{q-a}}{\partial w^{q-a}}[(1-zw)^{-p}]=p(p+1)\cdots (p+q-a-1))z^{q-a}(1-zw)^{-p-q+a}.\]
It follows that 
\begin{eqnarray}S_{\kappa}^{[p,q]}(z)&=&-p!\sum_{a=0}^{q}\binom{q}{a}(p-a+1)\cdots(p+q-a-1)z^{p+q-2a}(1-z^2)^{-p-q+a}\nonumber \\
&=&-p!\sum_{a=0}^{q}\binom{q}{a}\frac{(p+q-a-1)!}{(p-a)!}z^{p+q-2a}(1-z^2)^{-p-q+a}.\nonumber \end{eqnarray}
We write $$T_q(t)=\sum_{a=0}^{q}\binom{q}{a}\frac{(p+q-a-1)!}{(p-a)!}t^{q-a}(1-t)^{a}.$$
Then we see that
\[(1-z^2)^{p+q}S_{\kappa}^{[p,q]}(z)=-p!z^{p-q}T_q(z^2).\]
We will determine the coefficient of each term in $T_q$. Note that
\[(1-t)^{a}=\sum_{b=0}^{a}\binom{a}{b}(-t)^{b},\]
so that $$T_q(t)=\sum_{a=0}^{q}\binom{q}{a}\frac{(p+q-a-1)!}{(p-a)!}t^{q-a}\sum_{b=0}^{a}\binom{a}{b}(-t)^{b}.$$
Consequently, the coefficient $C_d$ of the term $t^d (d\leq q)$ in $T_q$ is
\[C_d=\sum_{a=0}^{q}\binom{q}{a}\frac{(p+q-a-1)!}{(p-a)!}\binom{a}{d-q+a}(-1)^{d-q-a}.\]
Since $0\leq d \leq q$, we see that 
\[C_d=\sum_{a=q-d}^{q}\binom{q}{a}\frac{(p+q-a-1)!}{(p-a)!}\binom{a}{d-q+a}(-1)^{d-q-a}\]
\[=\sum_{j=0}^{d}\binom{q}{q-d+j}\frac{(p+d-j-1)!}{(p-q+d-j)!}\binom{q-d+j}{j}(-1)^{j}\]
Meanwhile, 
\[\binom{q}{q-d+j}=\frac{q!}{(q-d+j)!(d-j)!}, \]

\[\binom{q-d+j}{j}=\frac{(q-d+j)!}{j!(q-d)!},\]

\[(p+d-j-1)!=\frac{(p+d-1)!}{(-1)^{j}(-p-d+1)_j},\]

\[(p-q+d-j)!=\frac{(p-q+d)!}{(-1)^j(-p+q-d)_j}.\]
Then, it follows from the Chu-Vandermonde identity that
\[C_d=\frac{q!(p+d-1)!}{d!(p-d)!(p-q+d)!}\sum_{j=0}^{d}\frac{(-d)_j(-p+q-d)!}{j!(-p-d+1)_j}\]
\[=\frac{q!(p+d-1)!}{d!(p-d)!(p-q+d)!} \frac{(1-q)_d}{(-p-d+1)_d}.\]
Note that $(1-q)_d=0$ when $d\geq q$, we know that the degree of the polynomial $T_q$ is $q-1$. Furthermore, in view of the facts that 
 \[(p+d-1)!=(p-1)!(p)_{d},\,\,(p-q+d)!=(p-q)!(p-q+1)_d,\]
\[(p-d)!=\frac{q!}{(-1)^d(-q)_d},\,\,(-p-d+1)_d=(-1)^d(p)_d,\]
we obtain that  
\begin{equation} C_d=\frac{(p-1)!}{(p-q)!}\frac{(-q)_d(1-q)_d}{d!(p-q+1)_d},\,0\leq d\leq q-1. \nonumber\end{equation}
This shows that all coefficients of $T_q$ are positive and we can write  
\begin{equation}S_{\kappa}^{[p,q]}(z)=-p!\frac{z^{p-q}}{(1-z^2)^{p+q}}\sum_{d=0}^{q-1}C_d z^{2d}. \nonumber\end{equation}
This proves Lemma \ref{key-l}.\end{proof}\begin{remark}
When $p\leq q$, we can similarly obtain that 
\begin{equation}S_{\kappa}^{[p,q]}(z)=-q!\frac{z^{q-p}}{(1-z^2)^{p+q}}\sum_{d=0}^{p-1}\widetilde{C}_d z^{2d}. \nonumber\end{equation}
Here, $$\widetilde{C}_d=\frac{(q-1)!}{(q-p)!}\frac{(-p)_d(1-p)_d}{d!(q-p+1)_d},\,0\leq d\leq p-1.$$
\end{remark}
\begin{lemma}\label{l-2}
Let $\alpha>-1, r\in (0,1), \lambda>0$. Let $\Theta$ be a nonnegative integer. If $2\lambda>\alpha+2$, then  
\begin{equation}\label{a-e-1}
 \|\sqrt{r}z^{\Theta}{(1-rz^2)^{-\lambda}}\|_{\alpha}^2=\frac{\Gamma(\alpha+2)\Gamma(2\lambda-\alpha-2)}{2^{\alpha+1}[\Gamma(\lambda)]^2}\cdot\frac{1+{o}(1)}{(1-r^2)^{2\lambda-\alpha-2}},\, {\textup{as}}\,\, r\rightarrow 1^{-}. 
 \end{equation}
If $0<2\lambda<\alpha+2$, then there is a constant $M>0$ such that for all $r\in (0,1)$,
 \begin{equation}\label{a-e-3}
   \|{(1-rz^2)^{-\lambda}}\|_{\alpha}^2\leq M.
 \end{equation} 
\end{lemma}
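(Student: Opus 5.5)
The plan is to compute the norm exactly through Taylor coefficients and then read off the asymptotics from a standard Abelian-type lemma for power series. Since monomials are orthogonal in ${\bf A}^2_{\alpha}$, polar coordinates give
\[
\|z^k\|_{\alpha}^2=(\alpha+1)\int_0^1 \rho^{2k}(1-\rho^2)^{\alpha}\,2\rho\,d\rho=(\alpha+1)B(k+1,\alpha+1)=\frac{\Gamma(k+1)\Gamma(\alpha+2)}{\Gamma(k+\alpha+2)}.
\]
Expanding $(1-rz^2)^{-\lambda}=\sum_{n\ge 0}\frac{(\lambda)_n}{n!}r^n z^{2n}$, we get
\[
\|\sqrt{r}z^{\Theta}(1-rz^2)^{-\lambda}\|_{\alpha}^2=r\,\Gamma(\alpha+2)\sum_{n\ge0}\Big[\frac{(\lambda)_n}{n!}\Big]^2\frac{\Gamma(2n+\Theta+1)}{\Gamma(2n+\Theta+\alpha+2)}\,(r^2)^n=: r\sum_{n\ge 0}a_n x^n ,
\]
where $x=r^2$.

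Next I determine the size of $a_n$. By Stirling's formula, $(\lambda)_n/n!=\Gamma(n+\lambda)/(\Gamma(\lambda)\Gamma(n+1))\sim n^{\lambda-1}/\Gamma(\lambda)$. Likewise $\Gamma(2n+\Theta+1)/\Gamma(2n+\Theta+\alpha+2)\sim (2n)^{-\alpha-1}$, and for fixed $\Theta$ this does not depend on $\Theta$. Hence
\[
a_n\sim c\,n^{s-1},\qquad c=\frac{\Gamma(\alpha+2)}{2^{\alpha+1}[\Gamma(\lambda)]^2},\qquad s=2\lambda-\alpha-2 .
\]

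For the case $2\lambda>\alpha+2$, i.e. $s>0$, I will use the classical Abelian fact that $a_n\sim c n^{s-1}$ implies $\sum a_n x^n\sim c\,\Gamma(s)(1-x)^{-s}$ as $x\to1^-$. To prove it, compare with $\sum_n \frac{\Gamma(n+s)}{\Gamma(s)\,n!}x^n=(1-x)^{-s}$, whose coefficients are $\sim n^{s-1}/\Gamma(s)$. Given $\varepsilon>0$, choose $N$ such that $a_n$ lies within a factor $1\pm\varepsilon$ of $c\Gamma(s)\frac{\Gamma(n+s)}{\Gamma(s)n!}$ for all $n\ge N$. The finitely many terms with $n<N$ contribute $O(1)$, which is $o((1-x)^{-s})$ because $s>0$. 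The tail is therefore squeezed between $(1\pm\varepsilon)c\,\Gamma(s)(1-x)^{-s}$ up to $O(1)$. The prefactor $r\to1$ does not affect the asymptotics. Together these give exactly \eqref{a-e-1}, with $(1-r^2)^{-s}$. This uniform-in-$x$ splitting argument is the only delicate step; the rest is bookkeeping.

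For the case $0<2\lambda<\alpha+2$ we have $\Theta=0$ and no factor $\sqrt r$, and $s<0$. Then $a_n=O(n^{s-1})$ is summable. Since $a_n\ge0$, the sum $\sum a_n r^{2n}$ is increasing in $r$ and bounded by $M:=\sum_n a_n<\infty$ for all $r\in(0,1)$, which is \eqref{a-e-3}.
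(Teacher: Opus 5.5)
Your proof is correct and follows the paper's own route: expand in Taylor coefficients, use the diagonal formula $\|z^k\|_{\alpha}^2=k!\,\Gamma(\alpha+2)/\Gamma(k+\alpha+2)$, apply Stirling to get coefficients $\sim c\,n^{2\lambda-\alpha-3}$, then use an Abelian estimate when $2\lambda>\alpha+2$ and summability when $2\lambda<\alpha+2$. The only difference is that you prove the Abelian step yourself by comparing with the binomial series of $(1-x)^{-s}$, whereas the paper quotes it from its earlier work as \eqref{l-eq-10}; that displayed form drops the factor $\Gamma(2\lambda-\alpha-2)$, which your version correctly carries and which does appear in \eqref{a-e-1}.
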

\begin{remark}
Here and in the sequel, ${o}(1)$ denotes a quantity depending on $r$ such that ${o}(1)\rightarrow 0$ as $r\rightarrow 1^{-}$, and which may be different in different places. The proof of this lemma is similar as the one of Lemma 2.3 in \cite{Jin-2}.
\end{remark}
\begin{proof}
First, recall that for $\varkappa>0$, 
\begin{equation}
  \frac{1}{(1-z)^{\varkappa}}=\sum_{n=0}^{\infty}\frac{\Gamma(n+\varkappa)}{n!\Gamma(\varkappa)}z^n, z\in \Delta.\nonumber \end{equation}
Then, for $r\in (0,1)$, we have
\begin{equation}\label{l-eq-1}
\frac{\sqrt{r}z^{\Theta}}{(1-rz^2)^{\lambda}}=r^{\frac{\Theta}{2}}\sum_{n=0}^{\infty}\frac{\Gamma(n+\lambda)}{n!\Gamma(\lambda)}r^nz^{2n+\Theta}.
\end{equation}
Note that for any $\phi=\sum_{n=0}^{\infty}a_nz^n\in \mathbf{A}_{\alpha}^2(\Delta)$, we have
\begin{equation}\label{l-eq-2}\|\phi\|_{\alpha}^2=\sum_{n=0}^{\infty}\frac{n!\Gamma(\alpha+2)}{\Gamma(n+\alpha+2)}|a_n|^{2}.
\end{equation}
Hence, it follows from (\ref{l-eq-1}) and (\ref{l-eq-2}) that
  \begin{equation}\label{l-eq-3}
 \|\sqrt{r}z^{\Theta}{(1-rz^2)^{-\lambda}}\|_{\alpha}^2=r^{\Theta}\frac{\Gamma(\alpha+2)}{[\Gamma(\lambda)]^2}\sum_{n=0}^{\infty}\frac{(2n+\Theta)!}{\Gamma(2n+\Theta+\alpha+2)}\Big|\frac{\Gamma(n+\lambda)}{n!}\Big|^2 r^{2n}. 
 \end{equation}
Recall from Stirling's formula that 
 \begin{equation}
 \frac{\Gamma(n+\lambda)}{n!}=(n+1)^{\lambda-1}[1+\widetilde{o}(1)],\,\, {\textup{as}} \,\, n\rightarrow \infty.\nonumber 
 \end{equation}
 Here and later, we use $\widetilde{o}(1)$ to denote a sequence that tends to $0$, i.e., $o(1)\rightarrow 0, \, {\textup{as}}\,\, n \rightarrow \infty$, which may be different in different places. Then it follows from (\ref{l-eq-3}) that 
 \begin{eqnarray}\label{l-eq-5}
\lefteqn{\|\sqrt{r}z^{\Theta}{(1-rz^2)^{-\lambda}}\|_{\alpha}^2}\nonumber \\
&&=r^{\Theta}\frac{\Gamma(\alpha+2)}{[\Gamma(\lambda)]^2}\sum_{n=0}^{\infty}\frac{[1+\widetilde{o}(1)]}{[2n+\Theta+1)]^{\alpha+1}}\cdot{(n+1)^{2\lambda-2}}[1+\widetilde{o}(1)]r^{2n}\nonumber \\
 &&=r^{\Theta}\frac{1}{2^{\alpha+1}}\frac{\Gamma(\alpha+2)}{[\Gamma(\lambda)]^2}\sum_{n=0}^{\infty}{(n+1)^{2\lambda-\alpha-3}}[1+\widetilde{o}(1)]r^{2n}.
 \end{eqnarray}
If $0<2\lambda<\alpha+2$, we know that there is a constant $C>0$ such that 
$$\sum_{n=0}^{\infty}{(n+1)^{2\lambda-\alpha-3}}r^{2n}\leq C,\,\,{\text{for all}}\,\, r\in (0,1).$$
This implies that (\ref{a-e-3}) is true. On the other hand, when $2\lambda>\alpha+2$, we have the following identity which has been shown in \cite[Lemma 3.2]{Jin-2}.  
 \begin{equation}\label{l-eq-10}
\sum_{n=0}^{\infty}{(n+1)^{2\lambda-\alpha-3}}[1+\widetilde{o}(1)]r^{2n}=\frac{1+{o}(1)}{(1-r^2)^{2\lambda-\alpha-2}},\, {\textup{as}}\,\, r\rightarrow 1^{-}. 
 \end{equation}
Also, note that $r^{\Theta}\to 1$ as $r\to 1^{-}$. Then (\ref{l-2}) follows from (\ref{l-eq-5}) and (\ref{l-eq-10}). The lemma is proved.
 \end{proof} 
We need the following results proved recently in \cite{Jin-2}.
\begin{lemma}\label{rec-1}Let $\beta>\alpha>-1$. If $g$ is a multiplier from ${\bf A}_{\alpha}^2$ to ${\bf A}_{\beta}^2$, let $g_a(z)=g(az), z\in \Delta, a\in \Delta$, then for any $a\in \Delta$, 
$$\|g_a\|_{\mathbf{M}_{\alpha, \beta}}\leq \|g\|_{\mathbf{M}_{\alpha, \beta}}.$$
\end{lemma}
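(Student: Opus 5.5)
Lemma \ref{rec-1} asserts that dilation does not increase the multiplier norm: for $g\in\mathbf{M}_{\alpha,\beta}$ and $a\in\Delta$, $\|g_a\|_{\mathbf{M}_{\alpha,\beta}}\le\|g\|_{\mathbf{M}_{\alpha,\beta}}$. The plan is to write $M_{g_a}$ as the composition of $M_g$ with a dilation-type operator and to use a contraction property of dilations on weighted Bergman spaces. By Proposition \ref{th-1} we already know $g_a\in\mathcal{B}_{\gamma}$, since $|g(az)|(1-|z|^2)^\gamma\le|g(az)|(1-|az|^2)^\gamma$, so $g_a$ is a multiplier. The issue is therefore only the sharp constant $1$.

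\textbf{Rotations and reduction.} First reduce to real $a=r\in[0,1)$. For unimodular $\zeta$, the rotation $C_\zeta\phi(z)=\phi(\zeta z)$ is unitary on every ${\bf A}^2_\alpha$, because $\|\phi\|_\alpha$ depends only on $|a_n|$ by (\ref{l-eq-2}). Since $M_{g_\zeta}=C_\zeta M_g C_{\bar\zeta}$, rotations preserve the multiplier norm. The case $a=0$ can be covered either by continuity or by checking directly that $|g(0)|\le\|g\|_{\mathbf{M}_{\alpha,\beta}}$, which follows from testing against $\phi\equiv 1$ and using the mean value property.

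\textbf{Main step: subharmonicity for fixed $\phi$.} For the real case, fix $\phi$ and set $u(w)=\|g_w\phi_w\|_\beta^2$ and $v(w)=\|\phi_w\|_\alpha^2$, where $h_w(z)=h(wz)$. If $\phi=\sum a_nz^n$ and $g\phi=\sum b_nz^n$, then by (\ref{l-eq-2})
\[
u(w)=\sum_n \frac{n!\,\Gamma(\beta+2)}{\Gamma(n+\beta+2)}\,|b_n|^2|w|^{2n},\qquad
v(w)=\sum_n \frac{n!\,\Gamma(\alpha+2)}{\Gamma(n+\alpha+2)}\,|a_n|^2|w|^{2n}.
\]
Both are subharmonic, and more usefully they are increasing power series in $t=|w|^2$. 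Since $g_r\phi_r=(g\phi)_r$, Lemma \ref{rec-1} would follow if we could show $\|g_r\psi\|_\beta\le\|g\|\,\|\psi\|_\alpha$ for arbitrary $\psi$, not merely for dilates $\psi=\phi_r$. This is the main obstacle: an arbitrary $\psi$ is not of the form $\phi_r$.

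\textbf{Handling the obstacle.} The first attempt uses the $n$-independent integral representation of dilation,
\[
\psi(rz)=\int_\Delta \psi(\xi z)\,d\mu_r(\xi),
\]
with $\mu_r$ a probability measure, for instance Poisson-type averaging, and then convexity of the norm. This does not immediately apply, since $g_r\psi$ involves $g(rz)\psi(z)$ rather than a dilate of $g\psi$.

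A cleaner route is harmonic-analytic, via the Hardy-space $H^\infty$ analogue. The multiplier norm equals $\sup_\phi \|g\phi\|_\beta/\|\phi\|_\alpha$. For fixed $z$, the function $w\mapsto g(wz)$ is analytic in $w\in\overline\Delta$. So consider the operator-valued analytic map $w\mapsto M_{g_w}$ on $\Delta$, with values in the bounded operators ${\bf A}^2_\alpha\to{\bf A}^2_\beta$. It is analytic because $M_{g_w}=\sum_n w^n M_{c_nz^n}$ converges in norm for $|w|<1$: indeed $\|z^n\|_{\mathbf{M}_{\alpha,\beta}}$ grows at most polynomially, by Proposition \ref{th-1} together with the comparison of the two norms.

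For fixed unit vectors $\phi,\chi$, the scalar function $w\mapsto\langle M_{g_w}\phi,\chi\rangle_\beta$ is analytic. By the maximum principle, $\|M_{g_w}\|\le\limsup_{|w|\to1}\|M_{g_w}\|$, and by rotation invariance the latter equals $\lim_{r\to1}\|M_{g_r}\|$. It then remains to show that $\|M_{g_r}\|\to$ a value at most $\|M_g\|$ as $r\to1$. This holds by strong-operator lower semicontinuity applied in the other direction: $\|g_r\phi\|_\beta\to\|g\phi\|_\beta$ for polynomials $\phi$, which gives $\liminf_r\|M_{g_r}\|\ge\|M_g\|$, while $\|M_{g_r}\|$ is nondecreasing in $r$ by the same maximum principle applied on discs $|w|\le r$.

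Combining these, $\|M_{g_r}\|$ is nondecreasing in $r$ with $\sup_r\|M_{g_r}\|$ attained in the limit. The key point to verify is the upper bound $\lim_r\|M_{g_r}\|\le\|M_g\|$. I would attempt it by weak compactness: pick near-extremal unit vectors $\phi_r,\chi_r$ for $M_{g_r}$, and pass to a subsequence converging uniformly on compacta. I expect this limiting step to be the delicate point.
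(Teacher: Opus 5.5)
\textbf{There is a genuine gap: the one inequality with real content is never proved.}

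You show that $r\mapsto\|M_{g_r}\|$ is nondecreasing and reduce the lemma to $\lim_{r\to1}\|M_{g_r}\|\le\|g\|_{\mathbf{M}_{\alpha,\beta}}$. By that monotonicity, this inequality is equivalent to the lemma itself, so the reduction makes no progress.

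The reason is that you apply the maximum principle to the operator norm $w\mapsto\|M_{g_w}\|$ on the open disc. The map $w\mapsto M_{g_w}$ is not known to be norm-continuous up to $|w|=1$. So this only compares $\|M_{g_w}\|$ with interior values $\|M_{g_r}\|$, $r<1$. The circle $|w|=1$, where $g_w$ is a rotation of $g$ and the norm is exactly $\|g\|_{\mathbf{M}_{\alpha,\beta}}$, never enters.

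Your semicontinuity remark gives $\liminf_{r\to1}\|M_{g_r}\|\ge\|g\|_{\mathbf{M}_{\alpha,\beta}}$, which is the opposite direction. The weak-compactness plan does not close the gap either. Weak limits of near-extremal unit vectors $\phi_r,\chi_r$ need not be near-extremal for $M_g$, for two reasons:
\begin{itemize}
\item their norms can drop, even to $0$ if the mass escapes to the boundary, as it does for the test functions $(1-rz^2)^{-\lambda}$ used for the symbol of Lemma \ref{rec-2} in the proof of Theorem \ref{th-m};
\item $\langle g_r\phi_r,\chi_r\rangle_\beta$ need not converge to $\langle g\phi,\chi\rangle_\beta$ when both factors converge only weakly.
\end{itemize}

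\textbf{The missing idea.} Apply the maximum principle to scalar matrix coefficients that are continuous on the closed disc, so that their boundary values are matrix coefficients of rotations of $M_g$ itself.

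Fix $\phi=\sum a_nz^n\in\mathbf{A}^2_\alpha$ and a polynomial $\chi=\sum_{n\le N}d_nz^n$, and write $g=\sum c_kz^k$. The $n$-th Taylor coefficient of $g(wz)\phi(z)$ is $\sum_{k=0}^{n}c_ka_{n-k}w^k$. Hence, by orthogonality of monomials, for every $|w|\le 1$
\[
P(w):=\langle g_w\phi,\chi\rangle_\beta=\sum_{n=0}^{N}\frac{n!\,\Gamma(\beta+2)}{\Gamma(n+\beta+2)}\,\overline{d_n}\sum_{k=0}^{n}c_ka_{n-k}w^k ,
\]
which is a polynomial in $w$.

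For $|\zeta|=1$, your own identity $g_\zeta\phi=C_\zeta(g\,C_{\bar\zeta}\phi)$ gives
\[
|P(\zeta)|\le\|g\|_{\mathbf{M}_{\alpha,\beta}}\|\phi\|_\alpha\|\chi\|_\beta .
\]
By the maximum modulus principle, the same bound holds for $|w|<1$.

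For $|w|<1$ the function $g_w$ is bounded, so $g_w\phi\in\mathbf{A}^2_\beta$. Polynomials are dense in $\mathbf{A}^2_\beta$, so taking the supremum over polynomials $\chi$ with $\|\chi\|_\beta\le 1$ yields $\|g_w\phi\|_\beta\le\|g\|_{\mathbf{M}_{\alpha,\beta}}\|\phi\|_\alpha$. This is exactly the lemma.

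Here $\phi$ is held fixed and only $g$ is dilated. So the obstacle you identified (an arbitrary $\psi$ is not a dilate $\phi_r$) never arises, and the detours through $\|g_w\phi_w\|_\beta$ and the integral representation of dilations are unnecessary. Note also that the paper gives no proof of this lemma; it is quoted from the author's earlier work.
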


\begin{lemma}\label{rec-2}Let $\beta>\alpha>-1$. Define $g_0(z):=(1-z^2)^{-\frac{\beta-\alpha}{2}}, z\in \Delta$. Then we have 
$$\|g_0\|_{\mathbf{M}_{\alpha, \beta}}=\frac{1}{2^{\beta-\alpha}}\frac{\Gamma(\beta+2)}{\Gamma(\alpha+2)}\Big[\frac{\Gamma(1+\alpha/2)}{\Gamma(1+\beta/2)}\Big]^2.$$
\end{lemma}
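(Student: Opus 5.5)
We read Lemma \ref{rec-2} as a statement about the \emph{square} of the multiplier norm. This is the normalization under which Theorem \ref{th-m} takes a square root, since with $\beta=\alpha+2p+2q$ the right-hand side of Lemma \ref{rec-2} is exactly ${\mathcal N}_{\alpha,p,q}$. We put $\gamma=(\beta-\alpha)/2$, so that $g_0=(1-z^2)^{-\gamma}$, and denote the right-hand side of the lemma by $\mathcal{R}$. The plan is to prove $\|g_0\|^2_{\mathbf{M}_{\alpha,\beta}}\ge \mathcal{R}$ by testing, and $\|g_0\|^2_{\mathbf{M}_{\alpha,\beta}}\le \mathcal{R}$ by a Schur-type argument.

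\emph{Lower bound.} Fix $\lambda$ with $2\lambda>\alpha+2$, and for $r\in(0,1)$ take the test function $\phi_r(z)=\sqrt r\,(1-rz^2)^{-\lambda}$. With $a=\sqrt r$, Lemma \ref{rec-1} gives $\|(g_0)_a\|_{\mathbf{M}_{\alpha,\beta}}\le\|g_0\|_{\mathbf{M}_{\alpha,\beta}}$, and $(g_0)_a\phi_r=\sqrt r\,(1-rz^2)^{-(\lambda+\gamma)}$. Apply Lemma \ref{l-2} twice with $\Theta=0$: once with $(\alpha,\lambda)$, and once with $(\beta,\lambda+\gamma)$. Since $2(\lambda+\gamma)-\beta-2=2\lambda-\alpha-2$, the two singular factors $(1-r^2)^{-(2\lambda-\alpha-2)}$ cancel. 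Letting $r\to1^-$ yields
\[
\|g_0\|_{\mathbf{M}_{\alpha,\beta}}^2\ \ge\ \frac{\Gamma(\beta+2)}{2^{\beta-\alpha}\Gamma(\alpha+2)}\Big[\frac{\Gamma(\lambda)}{\Gamma(\lambda+\gamma)}\Big]^2 .
\]
Letting $\lambda\downarrow(\alpha+2)/2$ turns this into $\mathcal{R}$, because $\lambda+\gamma\to1+\beta/2$.

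\emph{Upper bound.} Here we must show $\|g_0\phi\|_\beta^2\le \mathcal{R}\,\|\phi\|_\alpha^2$ for every $\phi\in{\bf A}^2_\alpha$. First, $g_0$ is even, so $M_{g_0}$ preserves the decomposition $\phi=\phi_e+\phi_o$ into even and odd parts. Both weighted norms split orthogonally along this decomposition, because by (\ref{l-eq-2}) they are diagonal in the monomials. Hence it suffices to treat $\phi_e(z)=\psi(z^2)$ and $\phi_o(z)=z\psi(z^2)$ separately.

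On each piece we work with Taylor coefficients via (\ref{l-eq-2}). Multiplication by $(1-z^2)^{-\gamma}$ acts on the coefficient sequence of $\psi$ as the lower-triangular Toeplitz matrix with entries $(\gamma)_k/k!$. The task is then to bound a weighted $\ell^2$ operator norm, with weights $n!/\Gamma(n+\alpha+2)$ on the input and $n!/\Gamma(n+\beta+2)$ on the output, at degrees $2n$ or $2n+1$.

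We intend to do this with a discrete Schur test whose test sequence is $h_n=\Gamma(n+s)/n!$. The exponent $s$ should be chosen to mimic the extremal sequence $(1-z^2)^{-\lambda}$ with $\lambda\to(\alpha+2)/2$ found in the lower bound. To evaluate the resulting row sums in closed form, we would use the Chu--Vandermonde identity (Lemma \ref{lemma-m}) or Gauss's summation theorem. The aim is to show that the Schur constant is $\sup_n$ of a ratio of Gamma functions which is monotone in $n$, with limit exactly $\mathcal{R}$.

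The main obstacle is this upper bound, specifically getting the Schur constant to be \emph{sharp}. A crude test sequence gives only boundedness, which Proposition \ref{th-1} already provides. If the discrete Schur test does not close, the fallback is to do the Schur test directly on the integral operator $\phi\mapsto\int g_0\,\phi\,\overline{K^\beta_z}\,(1-|w|^2)^\beta\,dA(w)$. There we would use test functions of the form $(1-|z|^2)^{-s}|1-z^2|^{-t}$ and optimize over $s,t$ so that the bound meets the lower bound found above.
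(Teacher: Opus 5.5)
Your reading of Lemma~\ref{rec-2} as a statement about $\|g_0\|^2_{\mathbf{M}_{\alpha,\beta}}$ is correct, and your lower bound is complete. It is the same argument the paper uses for the lower half of Theorem~\ref{th-m}: dilation via Lemma~\ref{rec-1}, asymptotics via Lemma~\ref{l-2}, then $\lambda\downarrow 1+\alpha/2$. The paper does not prove Lemma~\ref{rec-2} itself; it quotes it from the author's earlier work. So the question is whether your upper bound stands on its own, and it does not. It is a plan whose decisive step is left open, and the inequality $\|g_0\phi\|_\beta^2\le\mathcal{R}\|\phi\|_\alpha^2$ is the entire content of the lemma beyond Proposition~\ref{th-1}.

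The plan also does not go through as described. Write $\psi=\sum b_ju^j$, $(1-u)^{-\gamma}\psi=\sum c_nu^n$, $a_j=(\gamma)_j/j!$, and use $|c_n|^2\le(\sum_{j\le n}a_{n-j}h_j)(\sum_{j\le n}a_{n-j}|b_j|^2/h_j)$.
\begin{itemize}
\item On even functions it closes exactly. With $h_j=(1+\alpha/2)_j/j!$ one gets $\sum_j a_{n-j}h_j=(1+\beta/2)_n/n!$. This cancels the factor $\Gamma(n+1+\beta/2)$ that the duplication formula produces in $(2n)!/\Gamma(2n+\beta+2)$, and Gauss's theorem then makes every normalized column sum equal to $\mathcal{R}$.
\item On odd functions the weights $(2n+1)!/\Gamma(2n+\beta+3)$ carry the factors $\Gamma(n+\frac{\beta+3}{2})\Gamma(n+\frac{\beta}{2}+2)$ instead. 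With the same $h$, the column sums are non-terminating ${}_3F_2(1)$ series, in general without closed form. They tend to $\mathcal{R}$ as the column index grows, so you would still owe a proof that none of them exceeds $\mathcal{R}$.
\item The choice $h_j=(\frac{\alpha+3}{2})_j/j!$ restores Gauss-summability on the odd part, but it gives the constant $\frac{\Gamma(\beta+2)}{2^{\beta-\alpha}\Gamma(\alpha+2)}\cdot\frac{\Gamma(\frac{\alpha+1}{2})\Gamma(\frac{\alpha+3}{2})}{\Gamma(\frac{\beta+1}{2})\Gamma(\frac{\beta+3}{2})}$. This is strictly larger than $\mathcal{R}$, because $x\mapsto\Gamma(x+\frac12)\Gamma(x+\frac32)/\Gamma(x+1)^2$ is strictly decreasing (the digamma function is concave).
\end{itemize}

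The gap closes cleanly, with no parity splitting, once you notice that $g_0$ is a pure power on a half-plane. Put $z=\frac{w-1}{w+1}$ with $\mathrm{Re}\,w>0$. Then $1-z^2=\frac{4w}{(w+1)^2}$ and $1-|z|^2=\frac{4\,\mathrm{Re}\,w}{|w+1|^2}$. Hence $U_\alpha\phi(w)=2^{\alpha+1}(w+1)^{-\alpha-2}\phi(z)$ maps ${\bf A}^2_\alpha$ isometrically onto the half-plane Bergman space with squared norm $\frac{\alpha+1}{\pi}\int|F|^2(\mathrm{Re}\,w)^\alpha\,dA$, and $U_\beta(g_0\phi)=w^{-\gamma}U_\alpha\phi$ exactly.

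By the Paley--Wiener theorem, $U_\alpha\phi(w)=\int_0^\infty f(t)e^{-tw}\,dt$ with squared norm $2^{-\alpha}\Gamma(\alpha+2)\int_0^\infty|f(t)|^2t^{-\alpha-1}\,dt$. Multiplication by $w^{-\gamma}$ becomes the Riemann--Liouville integral $I^\gamma f(t)=\frac{1}{\Gamma(\gamma)}\int_0^t(t-s)^{\gamma-1}f(s)\,ds$. Writing $f=t^{\alpha/2}u$, one has $I^\gamma f(t)=\frac{t^{\beta/2}}{\Gamma(\gamma)}\int_0^1(1-\sigma)^{\gamma-1}\sigma^{\alpha/2}u(\sigma t)\,d\sigma$. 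Minkowski's inequality in the dilation-invariant space $L^2(dt/t)$ bounds this by $\frac{\Gamma(1+\alpha/2)}{\Gamma(1+\beta/2)}\|u\|_{L^2(dt/t)}$. Collecting the constants $2^{-\beta}\Gamma(\beta+2)$ and $2^{-\alpha}\Gamma(\alpha+2)$ gives exactly $\|g_0\phi\|_\beta^2\le\mathcal{R}\|\phi\|_\alpha^2$, which together with your lower bound proves the lemma.
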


Now, we start to present the proof of Theorem \ref{th-m}. 
\begin{proof}[Proof of Theorem \ref{th-m}]By symmetry, we may assume that $p\geq q$. We first show that 
$$\|S_{\kappa}^{[p,q]}\|_{{\bf {M}}_{\alpha, \alpha+2p+2q}}^2\leq {\mathcal{N}}_{\alpha, p, q}.$$
Note that all the coefficients of $T_q$ are positive, then we see that for any $z\in \Delta$, 
\[|S_{\kappa}^{[p,q]}(z)|=|p!\frac{z^{p-q}}{(1-z^2)^{p+q}}T_q(z^2)|\leq \frac{p!}{|1-z^2|^{p+q}}T_q(1).\]
On the other hand, the use of the Chu-Vandermonde identity yields 
\begin{eqnarray}T_q(1)&=&\frac{(p-1)!}{(p-q)!}\sum_{d=0}^{q-1}\frac{(-q)_d(1-q)_d}{d!(p-q+1)_d}\nonumber \\
&=&\frac{(p-1)!}{(p-q)!}\frac{(p+1)_{q-1}}{(p-q+1)_{q-1}}=\frac{(p+q-1)!}{p!}.\nonumber\end{eqnarray}
It follows that
\begin{equation}\label{key-ineq}|S_{\kappa}^{[p,q]}(z)|\leq \frac{(p+q-1)!}{|1-z^2|^{p+q}}.\end{equation}
By Lemma \ref{rec-2}, we obtain that
\begin{eqnarray}\lefteqn{\|S_{\kappa}^{[p,q]}\|_{{\bf {M}}_{\alpha, \alpha+2p+2q}^2}}\nonumber \\
&&\leq \frac{(p+q-1)!}{2^{2p+2q}}\frac{\Gamma(\alpha+2p+2q+2)}{\Gamma(\alpha+2)}\Big[\frac{\Gamma(1+\alpha/2)}{\Gamma(1+\alpha/2+p+q)}\Big]^2={\mathcal{N}}_{\alpha, p, q}.\nonumber \end{eqnarray}
Next, we will show that  $$\|S_{\kappa}^{[p,q]}\|_{{\bf {M}}_{\alpha, \alpha+2p+2q}}^2\geq {\mathcal{N}}_{\alpha, p, q}.$$
Let 
\[S_{\kappa}^{[p,q]}(z)=-p!z^{p-q}\frac{T_q(z^2)}{(1-z^2)^{p+q}}:=-p!z^{p-q}W(z^2).\]
Then, since the degree of the polynomial $T_q$ is $q-1$, we can write 
\[W(t):=\sum_{j=0}^{p+q-1}\frac{A_j}{(1-t)^{p+q-j}}.\]
Here, $A_j$ are real numbers and  
\[A_0=\lim\limits_{t\to 1}(1-t)^{p+q}W(t)=T_q(1)=\frac{(p+q-1)!}{p!}.\]
Consequently, we get that
\[S_{\kappa}^{[p,q]}(z)=-\Big[\frac{(p+q-1)!z^{p-q}}{(1-z^2)^{p+q}}+\sum_{j=1}^{p+q-1}\frac{p!A_jz^{p-q}}{(1-z^2)^{p+q-j}}\Big].\]
By using the inequality$$\Big|\sum_{m=1}^{n}a_m\Big|^2\geq |a_1|^2-2|a_1|\sum_{m=2}^{n}|a_m|,$$
we obtain that
\begin{eqnarray}|S_{\kappa}^{[p,q]}(z)|^2&\geq&\Big|\frac{(p+q-1)!z^{p-q}}{(1-z^2)^{p+q}}\Big|^2-2(p+q-1)!\Big|\sum_{j=1}^{p+q-1}\Big|\frac{p!A_jz^{2p-2q}}{(1-z^2)^{2p+2q-j}}\Big|\nonumber \\
&\geq& \Big|\frac{(p+q-1)!z^{p-q}}{(1-z^2)^{p+q}}\Big|^2-\mathbf{M}\sum_{j=1}^{p+q-1}\frac{1}{|(1-z^2)^{2p+2q-j}|}.\nonumber
\end{eqnarray}
Here, we can take $\mathbf{M}=2(p+q-1)!p!\max\limits_{1\leq j\leq p+q-1}\{|A_j|\}$.
Let $\alpha+2<2\lambda<\alpha+3$, from Lemma \ref{rec-1}, we know that
\begin{eqnarray}\label{p-1}\|S_{\kappa}^{[p,q]}\|_{\mathbf{M}_{\alpha, \alpha+2p+2q}}^2&\geq&\sup_{r\in(0,1)}\frac{\|S_{\kappa}^{[p,q]}(\sqrt{r}z)(1-rz^2)^{-\lambda}\|_{\alpha+2p+2q}^{2}}{\|(1-rz^2)^{-\lambda}\|_{\alpha}^2}. 
\end{eqnarray}
On the other hand, we have 
\begin{eqnarray}\label{p-2}\lefteqn{\|S_{\kappa}^{[p,q]}(\sqrt{r}z)(1-rz^2)^{-\lambda}\|_{\alpha+2p+2q}^{2}}
\nonumber \\
&&\geq[(p+q-1)!]^2\|\sqrt{r}z^{p-q}(1-rz^2)^{-p-q-\lambda}\|_{\alpha+2p+2q}^{2}\nonumber \\
&&\quad-\mathbf{M}\sum_{j=1}^{p+q-1}\|(1-rz^2)^{-p-q+\frac{j}{2}-\lambda}\|_{\alpha+2p+2q}^2. 
\end{eqnarray}
From (\ref{a-e-1}) in Lemma \ref{l-2}, we have
\begin{eqnarray}\label{p-3}
\lefteqn{\|(1-rz^2)^{-\lambda}\|_{\alpha}^{2}}
\nonumber \\ &&=\frac{\Gamma(\alpha+2)\Gamma(2\lambda-\alpha-2)}{2^{\alpha+1}[\Gamma(\lambda)]^2}\cdot\frac{1+{o}(1)}{(1-r^2)^{2\lambda-\alpha-2}},\, {\textup{as}}\,\, r\rightarrow 1^{-},  
\end{eqnarray} by taking $\Theta=0$, and 
\begin{eqnarray}\label{p-4}
\lefteqn{\|\sqrt{r}z^{p-q}(1-rz^2)^{-p-q-\lambda}\|_{\alpha+2p+2q}^{2}}
\nonumber \\ &&=\frac{\Gamma(\alpha+2p+2q+2)\Gamma(2\lambda-\alpha-2)}{2^{\alpha+2p+2q+1}[\Gamma(\lambda+p+q)]^2}\cdot\frac{1+{o}(1)}{(1-r^2)^{2\lambda-\alpha-2}},\, {\textup{as}}\,\, r\rightarrow 1^{-}, 
\end{eqnarray} since $2(p+q+\lambda)>\alpha+2p+2q+2$. 

In view of (\ref{a-e-3}), we see that there is a constant $\mathbf{N}>0$ such that 
\begin{equation}\label{p-5}\|(1-rz^2)^{-p-q+\frac{j}{2}-\lambda}\|_{\alpha+2p+2q}^2\leq \mathbf{N},\end{equation}
because $0<2(p+q-\frac{j}{2}+\lambda)<\alpha+2p+2q+2$, for all $1\leq j\leq p+q-1$. 

Combining (\ref{p-1})-(\ref{p-5}), we obtain that
\begin{eqnarray} \lefteqn{\|S_{\kappa}^{[p,q]}\|_{\mathbf{M}_{\alpha, \alpha+2p+2q}}^2} \nonumber \\
&&\geq\sup_{r\in(0,1)}[(p+q-1)!]^2\frac{\Gamma(\alpha+2p+2q+2)}{2^{\alpha+2p+2q+1}[\Gamma(\lambda+p+q)]^2}\frac{2^{\alpha+1}[\Gamma(\lambda)]^2}{\Gamma(\alpha+2)}[1+o(1)]\nonumber \\
&&\qquad -(p+q-1)\mathbf{M}\mathbf{N}(1-r^2)^{2\lambda-\alpha-2}[1+o(1)]. \nonumber 
\end{eqnarray}
Hence, let $r\to 1^{-}$, we have 
\[\|S_{\kappa}^{[p,q]}\|_{\mathbf{M}_{\alpha, \alpha+2p+2q}}^2 \geq[(p+q-1)!]^2 \frac{\Gamma(\alpha+2p+2q+2)}{2^{2p+2q}[\Gamma(\lambda+p+q)]^2}\frac{[\Gamma(\lambda)]^2}{\Gamma(\alpha+2)}.\]
Consequently, let $\lambda \to (\frac{\alpha}{2}+1)^{+}$, we get that

\noindent  \(\|S_{\kappa}^{[p,q]}\|_{\mathbf{M}_{\alpha, \alpha+2p+2q}}^2\)
 \[\geq \frac{[(p+q-1)!]^2}{2^{2p+2q}}\frac{\Gamma(\alpha+2+2p+2q)}{\Gamma(\alpha+2)}\Big[\frac{\Gamma(\alpha/2+1)}{\Gamma(\alpha/2+1+p+q)}\Big]^2=\mathcal{N}_{\alpha,p,q}.\]
This finishes the proof of Theorem \ref{th-m}.\end{proof}

\section{{\bf Final remarks}}
In this section, we first point out that we can reprove the main result in \cite{Do}, Theorem \ref{th-d}, by the results obtained in the present paper. Then we will show that the Koebe function is still the extremal function for certain higher-order Schwarzians. 
\begin{remark}As a key step to prove Theorem \ref{th-d}, after studying the following function 
\[P_q(w)=(1-w)^{2q}\sum_{n=q}^{\infty}\frac{n^2(n-1)\cdots(n-q+1)^2}{n}w^{n-q},\, w\in \Delta, q\in \mathbb{N},\]
Donaire has shown that
\begin{lemma}\label{d-l}
Let $\alpha>-1, p,q\in \mathbb{N}$. Let $f\in \mathcal{S}$. Then, for any fixed $w\in \Delta$, we have
the function $G_f^{[p,q]}(z,w)$ belongs to $\mathbf{A}_{2p}^2$ and  
\begin{equation}\label{d-ineq}\|G_f^{[p,q]}(\cdot,w)\|_{2p}^2\leq \frac{(2p-1)!(2q-1)!}{(1-|w|^2)^{2q}}.\end{equation}
\end{lemma}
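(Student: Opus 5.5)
The plan is to expand $F$ in Grunsky coefficients and reduce the weighted Bergman norm of $G_f^{[p,q]}(\cdot,w)$ to the Grunsky inequality, followed by an elementary coefficientwise comparison.

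\textbf{Step 1 (expansion).} Since $F(z,0)=F(0,w)=0$, we can write
\[F(z,w)=\sum_{m,n\geq 1}\gamma_{mn}z^mw^n,\]
where $\gamma_{mn}$ are (up to the usual normalization) the Grunsky coefficients of $f$. For $f\in\mathcal{S}$, the Grunsky inequality gives, for every finitely supported sequence $(\lambda_n)$,
\[\sum_{m\geq1} m\Big|\sum_{n\geq1}\gamma_{mn}\lambda_n\Big|^2\leq \sum_{n\geq 1}\frac{|\lambda_n|^2}{n}.\]
By truncation and monotone convergence, this extends to all $(\lambda_n)$ for which the right-hand side is finite.

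Differentiating termwise gives
\[G_f^{[p,q]}(z,w)=\sum_{m\geq p}\frac{m!}{(m-p)!}\,b_m(w)\,z^{m-p},\qquad b_m(w)=\sum_{n\geq q}\gamma_{mn}\frac{n!}{(n-q)!}w^{n-q}.\]

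\textbf{Step 2 (the $z$-variable).} By the coefficient formula for $\|\cdot\|_{2p}$ used in the proof of Lemma \ref{l-2},
\[\|G_f^{[p,q]}(\cdot,w)\|_{2p}^2=(2p+1)!\sum_{m\geq p}\frac{(m!)^2}{(m-p)!\,(m+p+1)!}|b_m(w)|^2.\]
I will show the elementary estimate
\[\frac{(m!)^2}{(m-p)!(m+p+1)!}\leq \frac{m}{2p(2p+1)}\qquad (m\geq p).\]
Dividing the left side by $m$, the ratio of consecutive terms (from $m$ to $m+1$) equals $\frac{m(m+1)}{(m+1-p)(m+p+2)}\leq 1$, because $(m+1-p)(m+p+2)-m(m+1)=(m+1)+p(1-p)+\dots\geq0$ can be checked directly. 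So the maximum of the divided quantity occurs at $m=p$. There the claim reduces to
\[\frac{(p!)^2}{(2p+1)!}\leq\frac{p}{2p(2p+1)},\quad\text{i.e.}\quad 2(p!)^2\leq (2p)!,\]
which holds. Hence
\[\|G_f^{[p,q]}(\cdot,w)\|_{2p}^2\leq (2p-1)!\sum_m m|b_m(w)|^2.\]

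\textbf{Step 3 (the $w$-variable).} Apply the Grunsky inequality with $\lambda_n=\frac{n!}{(n-q)!}w^{n-q}$ for $n\geq q$, and $\lambda_n=0$ otherwise. Writing $x=|w|^2$, this gives
\[\sum_m m|b_m(w)|^2\leq\sum_{n\geq q}\frac{1}{n}\Big(\frac{n!}{(n-q)!}\Big)^2x^{n-q}=\frac{P_q(x)}{(1-x)^{2q}},\]
where $P_q$ is Donaire's function.

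It remains to show this series is at most $(2q-1)!\,(1-x)^{-2q}=\sum_{k\geq0}\frac{(k+2q-1)!}{k!}x^k$, and I will do this coefficientwise. With $n=k+q$, the required inequality
\[n\Big(\frac{(n-1)!}{(n-q)!}\Big)^2\leq\frac{(n+q-1)!}{(n-q)!}\]
is equivalent to
\[n!\,(n-1)!\leq (n+q-1)!\,(n-q)!.\]
This follows from log-convexity of $k\mapsto k!$: the pairs $(n,n-1)$ and $(n+q-1,n-q)$ have the same sum, and the second pair is more spread. Equivalently, $\prod_{i=1}^{q-1}(n+i)\geq\prod_{i=1}^{q-1}(n-i)$.

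Combining Steps 2 and 3 gives (\ref{d-ineq}), and in particular $G_f^{[p,q]}(\cdot,w)\in\mathbf{A}^2_{2p}$.

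\textbf{Main obstacle.} I expect the only delicate point to be justifying the Grunsky inequality for the infinite sequence $\lambda$. This is handled by truncating $\lambda$, applying Fatou's lemma on the left side, and using $|w|<1$ for convergence. The rest is bookkeeping of the normalization of $\gamma_{mn}$, which must match the form of the Grunsky inequality quoted above.
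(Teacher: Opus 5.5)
\textbf{The monotonicity argument in Step 2 is false for $p\ge 3$.} You correctly compute the ratio of consecutive terms of $a_m:=\frac{(m!)^2}{m\,(m-p)!\,(m+p+1)!}$ as $\frac{m(m+1)}{(m+1-p)(m+p+2)}$. However,
\[
(m+1-p)(m+p+2)-m(m+1)=2(m+1)-p(p+1),
\]
and this is negative at $m=p$ as soon as $p\ge 3$. For $p=3$ the ratio at $m=3$ is $\frac{12}{8}>1$, and the values are $a_3=\frac1{420}$, $a_4=\frac1{280}$, $a_5=a_6=\frac1{252}$. The sequence increases until $m\approx p(p+1)/2$, so its maximum is not at $m=p$. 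Reducing to $2(p!)^2\le(2p)!$ therefore proves nothing for $p\ge3$.

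The inequality you want is still true, and it has a one-line proof:
\[
\frac{(m!)^2}{(m-p)!\,(m+p+1)!}=\frac{m}{(m+p)(m+p+1)}\prod_{i=1}^{p-1}\frac{m-i}{m+i}\le\frac{m}{(m+p)(m+p+1)}\le\frac{m}{2p(2p+1)}\qquad(m\ge p).
\]
With this replacement, Step 2 yields $\|G_f^{[p,q]}(\cdot,w)\|_{2p}^2\le(2p-1)!\sum_m m|b_m(w)|^2$, as you claimed.

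\textbf{The rest of your argument is correct.} The normalization also works out exactly. With $g(\zeta)=1/f(1/\zeta)\in\Sigma$ one has $F(z,w)=\log\frac{g(1/z)-g(1/w)}{1/z-1/w}$. So your $\gamma_{mn}$ are minus the Grunsky coefficients of $g$, and the strong Grunsky inequality applies as you quoted it.

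\textbf{Comparison with the paper.} The Grunsky reduction is the one the paper borrows from Donaire. Your route differs only in the final bound on $\sum_{n\ge q}\frac1n\big(\frac{n!}{(n-q)!}\big)^2x^{n-q}$. The paper identifies this series with $-S_\kappa^{[q,q]}(\sqrt{x})$ and invokes \eqref{key-ineq}. That bound rests on the explicit formula of Lemma \ref{key-l}, the positivity of the coefficients of $T_q$, and Chu--Vandermonde. Your coefficientwise comparison instead uses $n!\,(n-1)!\le(n+q-1)!\,(n-q)!$, that is, $\prod_{i=1}^{q-1}(n-i)\le\prod_{i=1}^{q-1}(n+i)$. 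It is fully elementary and self-contained. The paper takes its route because it exhibits $P_q(w^2)=-(1-w^2)^{2q}S_\kappa^{[q,q]}(w)$, tying Donaire's lemma to the Koebe function. Your route obtains the lemma itself more cheaply.
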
 
We will use a different method to prove Lemma \ref{d-l}. Let the series expansion of the function $F$ be as follows. 
\[F(z,w)=\sum_{n=1,k=1}\gamma_{n,k}z^kw^n.\]
Here, $\gamma_{n,k}$ are the Grunsky coefficients of $f$. 
We know that 
\[G_{p,q}(z,w)=\sum_{k=p}^{\infty}\frac{k!}{(k-p)!}\Big(\sum_{n=q}^{\infty}\gamma_{n,k}\frac{n!}{(n-q)!}w^{n-q}\Big)z^{k-p}\]
so that, as is done in \cite[Pages 320-321]{Do}, by using the Grunsky inequality,  
\begin{eqnarray}
\|G_{p,q}(\cdot, w)\|_{2p}^2&\leq& (2p-1)!\sum_{n=q}^{\infty}\frac{n!}{n(n-q)!}w^{2n-2q}\nonumber \\
&=& (2p-1)!\sum_{n=q}^{\infty}\frac{n^2(n-1)^2\cdot(n-q+1)^2}{n}|w|^{2n-2q}.\nonumber
\end{eqnarray} 
Then, by a result about $P_q$, Donaire proved Lemma \ref{d-l}. 

Here, we note that, for $t\in [0,1)$, 
\[\sum_{n=q}^{\infty}\frac{n^2(n-1)\cdots(n-q+1)^2}{n}t^{2n-2q}=-\frac{\partial^{2q}K}{\partial z^q \partial w^q}|_{z=w=t}\]
\[=-\frac{\partial^{2q}}{\partial z^q \partial w^q}\log(1-zw)|_{z=w=t}=-S_{\kappa}^{[q,q]}(t).\]
It follows from (\ref{key-ineq}) that 
\[\sum_{n=q}^{\infty}\frac{n^2(n-1)\cdots(n-q+1)^2}{n}|w|^{2n-2q}\leq \frac{(2q-1)!}{(1-|w|^2)^{2q}}.\]
This proves Lemma \ref{d-l} by a different way. Then we can finish the proof of Theorem \ref{th-d} after repeating the arguments of the proof of Theorem $1$ in \cite[Page 321]{Do}.
We have given an alternative proof of Theorem \ref{th-d}. As a byproduct, we further observe that  
$$P_q(w^2)=-(1-w^2)^{2q}S_{\kappa}^{[q,q]}(w),\,w\in \Delta.$$
\end{remark}

\begin{remark}
We know from Proposition \ref{th-1} and Theorem \ref{th-d} that $S_f^{[p,q]}$ belongs to $\mathcal{B}_{p+q}$ for all $f\in \mathcal{S}$. Moreover, we have
\begin{proposition}\label{last-p}
Let $f\in \mathcal{S}$. Then we have
\begin{equation}\label{jia-m}
\|S_{f}^{[p,q]}\|_{\mathcal{B}_{p+q}}=\sup\limits_{z\in \Delta}|S_{f}^{[p,q]}(z)|(1-|z|^2)^{p+q}\leq \sqrt{(2p-1)!(2q-1)!}\end{equation}
\end{proposition}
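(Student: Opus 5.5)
The plan is to prove the pointwise bound directly from Lemma \ref{d-l} (Donaire's estimate, reproved in the preceding remark), combined with the standard pointwise estimate for functions in weighted Bergman spaces.

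First, fix $w\in\Delta$ and regard $\phi(z)=G_f^{[p,q]}(z,w)$ as an element of $\mathbf{A}_{2p}^2$. For $\phi\in\mathbf{A}_\alpha^2$ with the normalization used in the paper, the reproducing kernel is $(1-\bar\zeta z)^{-(\alpha+2)}$. Cauchy--Schwarz then gives
\[
|\phi(\zeta)|^2\leq \frac{\|\phi\|_\alpha^2}{(1-|\zeta|^2)^{\alpha+2}}.
\]
This can be checked from the coefficient formula (\ref{l-eq-2}): apply Cauchy--Schwarz to $\sum a_n\zeta^n$ with weights $n!\Gamma(\alpha+2)/\Gamma(n+\alpha+2)$, and use $\sum_n \frac{\Gamma(n+\alpha+2)}{n!\Gamma(\alpha+2)}|\zeta|^{2n}=(1-|\zeta|^2)^{-\alpha-2}$.

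With $\alpha=2p$, evaluating at $\zeta=w$ gives
\[
|S_f^{[p,q]}(w)|^2=|G_f^{[p,q]}(w,w)|^2\leq \frac{\|G_f^{[p,q]}(\cdot,w)\|_{2p}^2}{(1-|w|^2)^{2p+2}}.
\]
Inserting (\ref{d-ineq}) yields $|S_f^{[p,q]}(w)|^2(1-|w|^2)^{2p+2q+2}\leq (2p-1)!(2q-1)!$. This is off by a factor $(1-|w|^2)^2$, because the weight exponent $2p$ gives $\alpha+2=2p+2$ rather than $2p$. So this crude route does not close, and the main obstacle is removing this extra factor.

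To fix it, I would go back to the coefficient argument in the remark rather than use the norm bound as a black box. Write
\[
G_f^{[p,q]}(z,w)=\sum_{k\ge p}\frac{k!}{(k-p)!}c_k(w)z^{k-p}, \qquad c_k(w)=\sum_{n\ge q}\gamma_{n,k}\frac{n!}{(n-q)!}w^{n-q}.
\]
At $z=w$, apply Cauchy--Schwarz in the form
\[
|S_f^{[p,q]}(w)|^2\leq\Big(\sum_k k|c_k(w)|^2\Big)\Big(\sum_k \frac{1}{k}\Big[\frac{k!}{(k-p)!}\Big]^2|w|^{2k-2p}\Big).
\]
The second factor equals $-S_\kappa^{[p,p]}(|w|)$, which by (\ref{key-ineq}) is at most $(2p-1)!/(1-|w|^2)^{2p}$. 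For the first factor, the Grunsky inequality $\sum_k k|\sum_n\gamma_{n,k}x_n|^2\le\sum_n|x_n|^2/n$ with $x_n=\frac{n!}{(n-q)!}w^{n-q}$ gives at most $\sum_n \frac1n[\frac{n!}{(n-q)!}]^2|w|^{2n-2q}=-S_\kappa^{[q,q]}(|w|)\le (2q-1)!/(1-|w|^2)^{2q}$, again by (\ref{key-ineq}).

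Multiplying the two factors gives $|S_f^{[p,q]}(w)|^2(1-|w|^2)^{2p+2q}\le(2p-1)!(2q-1)!$, which is (\ref{jia-m}). The key step is choosing the weights $k$ and $1/k$ in Cauchy--Schwarz so that they match the Grunsky inequality. It also relies on identifying both resulting series with Koebe higher Schwarzians at real points, where (\ref{key-ineq}) applies with $|1-t^2|=1-t^2$.
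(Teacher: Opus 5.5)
Your final argument is correct and is essentially the paper's proof. The paper applies the bilinear Grunsky inequality with $x_k=\frac{k!}{(k-p)!}z^{k-p}$ and $x_n=\frac{n!}{(n-q)!}z^{n-q}$, which is exactly your Cauchy--Schwarz step with weights $k,1/k$ followed by the strong Grunsky inequality; it then bounds both series by $|S_{\kappa}^{[j,j]}(|z|)|\le (2j-1)!/(1-|z|^2)^{2j}$, $j=p,q$, using (\ref{key-ineq}). Incidentally, your abandoned first route also works, because the paper's last remark shows the norm in Lemma \ref{d-l} is really the one with weight $(1-|z|^2)^{2p-2}$ (e.g.\ $\|G_\kappa^{[1,1]}(\cdot,w)\|_2^2=(1-|w|^2)^{-2}$); the point-evaluation bound there gives $(1-|w|^2)^{-2p}$, which with (\ref{d-ineq}) yields (\ref{jia-m}) exactly.
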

\begin{proof}
First, note that 
\begin{equation}\label{jia-eq-1}S_{f}^{[p,q]}(z)=G_{p,q}(z,z)=\sum_{k=p}^{\infty}\sum_{n=q}^{\infty}\gamma_{n,k}\frac{k!}{(k-p)!}\frac{n!}{(n-q)!}z^{k+n-p-q}.\end{equation}
Recall that the following Grunsky inequality 
\[\Big|\sum_{k=p}^{\infty}\sum_{n=q}^{\infty}\gamma_{n,k}x_k x_n\Big|^2\leq \sum_{k=p}^{\infty}\frac{|x_k|^2}{k}\sum_{n=q}^{\infty}\frac{|x_n|^2}{n}\]
holds if the two series both converges in the right side of the inequality, see \cite[Pages 60-61]{Po}. Then, let $x_n=\frac{n!}{(n-p)!}z^{n-q}$ in (\ref{jia-eq-1}), we obtain that
\[
|S_{f}^{[p,q]}(z)|^2\leq \sum_{k=p}^{\infty}\frac{[k!]^2}{k[(k-p)!]^2}|z|^{2(k-p)}\sum_{n=q}^{\infty}\frac{[n!]^2}{n[(n-q)!]^2}|z|^{2(n-q)}.\]
On the other hand, as above, we have 
\[
\sum_{m=j}^{\infty}\frac{[m!]^2}{m[(m-j)!]^2}|z|^{2(m-j)}=|S_{\kappa}^{[j,j]}(|z|)|\leq \frac{(2j-1)!}{(1-|z|^2)^{2j}},\ j=p,q. 
\]
It follows that 
\[
|S_{f}^{[p,q]}(z)|^2\leq \frac{(2p-1)!(2q-1)!}{(1-|z|^2)^{2p+2q}}.\]
That is
\[|S_{f}^{[p,q]}(z)|(1-|z|^2)^{p+q}\leq \sqrt{(2p-1)!(2q-1)!}.\]
This finish the proof of Proposition \ref{last-p}.
\end{proof}
\end{remark}
 
\begin{remark}Recalling the arguments about $S_{\kappa}^{[p,q]}$ in the proof of Theorem \ref{th-m}, we see that \[\|S_{\kappa}^{[p,q]}\|_{\mathcal{B}_{p+q}}=(p+q-1)!.\]
This means that the inequality (\ref{jia-m}) is sharp for the Koebe function when $p=q$.
That is to say, the Koebe function is still the extremal function for certain higher-order Schwarzians. 

Also, notice that when $p>q$, i.e., $p\geq q+1$,
\[\frac{(2p)!}{(p+q)!}=(p+p)\cdot (p+p-1)\cdots (p+q+1)\]
\[\qquad\qquad >(q+p)(q+p-1)\cdots(q+q+1)=\frac{(p+q)!}{(2q)!},\]
and $2p\cdot2q<(p+q)^2$. Hence we obtain that 
\[(2p-1)!(2q-1)!=\frac{(2p)!(2q)!}{2p\cdot2q}>\frac{[(p+q)!]^2}{(p+q)^2}=[(p+q-1)!]^2,\]
for $p>q$. It follows that
\[\sqrt{(2p-1)!(2q-1)!}>(p+q-1)!,\]for $p\neq q$. Then, it is natural to ask 
\begin{question}
Can the upper bound of the inequality (\ref{jia-m}) be sharpened to $(p+q-1)!$ when $p\neq q$?
\end{question}
 \end{remark}
 
\begin{remark}
In Lemma \ref{d-l}, when $p=q=1$, a direct computation yields that 
\[G_{\kappa}^{[1,1]}(z,w)=\frac{1}{(1-zw)^2},\]
so that
\[
\| G_{\kappa}^{[1,1]}(\cdot, w) \|_{2}^2 = \frac{1}{(1 - |w|^2)^2}.
\]
This means that the Koebe function attains equality in inequality (\ref{d-ineq}). However, when $p\geq 2, q\in \mathbb{N}$, the inequality (\ref{d-ineq}) in Lemma \ref{d-l} is strict for any univalent function \(f \in \mathcal{S}\). 
Indeed, from the proof of Lemma \ref{d-l} in \cite{Do}, we obtain that
\[
\| G_f^{[p,q]}(\cdot, w) \|_{2p}^2 = (2p-1)! \sum_{k=p}^{\infty} 
\frac{k(k-1)\cdots(k-p+1)}{(k+p-1)\cdots(k+1)}\left| \sum_{n=q}^{\infty} \gamma_{n,k} \frac{n!}{(n-q)!} w^{n-q} \right|^2.
\]
Note that, for $p\geq 2$, we have
\[
\frac{(k-1)\cdots(k-p+1)}{(k+p-1)\cdots(k+1)}<1,
\]
for each $k\geq p$ so that
\[
\| G_f^{[p,q]}(\cdot, w) \|_{2p}^2<(2p-1)! \sum_{k=p}^{\infty}k
\left| \sum_{n=q}^{\infty} \gamma_{n,k} \frac{n!}{(n-q)!} w^{n-q} \right|^2.
\]
Consequently, for $p\geq 2, q\in \mathbb{N}$, we get that 
\[
\| G_f^{[p,q]}(\cdot, w) \|_{2p}^2 < \frac{(2p-1)!(2q-1)!}{(1 - |w|^2)^{2q}},
\]
for any $f\in \mathcal{S}$ even when \(f\) is the Koebe function. That is to say, for $p\geq 2, q\in \mathbb{N}$, the upper bound of (\ref{d-ineq}) can not be attainable and there is no extremal 
function in Lemma \ref{d-l}. A more refined question is 
\begin{question}Is it true that for every $f\in \mathcal{S}$,
\[\sup_{w\in \Delta}(1-|w|^2)^{2q}\| G_f^{[p,q]}(\cdot, w) \|_{2p}^2<(2p-1)!(2q-1)!, 
\]
when $p\geq 2, q\in \mathbb{N}$? 
\end{question}
\end{remark}

\section{{\bf Acknowledgements}}
The author was supported by National Natural Science Foundation of China (Grant No. 11501157). 

\begin{spacing}{1.2}

\end{spacing}
\end{document}